\newtheorem{theorem}{Theorem}[section]
\newtheorem{lemma}[theorem]{Lemma}
\newtheorem{proposition}[theorem]{Proposition}
\newtheorem{corollary}[theorem]{Corollary}
\newtheorem{conjecture}[theorem]{Conjecture}
\theoremstyle{definition}
\newtheorem{definition}[theorem]{Definition}
\newtheorem{remark}[theorem]{Remark}
\numberwithin{equation}{section}
\newtheorem{example}[theorem]{Example}
\newtheorem{setting}[theorem]{Setting}
\begin{document}

\normalfont

\title{Topologization and Functional Analytification I: Intrinsic Morphisms of Commutative Algebras}
\author{Xin Tong}

\maketitle

\begin{abstract}
\rm Eventually after Dieudonn\'e-Grothendieck, we give intrinsic definitions of \'etale, lisse and non-ramifi\'e morphisms for general adic rings and general locally convex rings. And we investigate the corresponding \'etale-like, lisse-like and non-ramifi\'e-like morphisms for general $\infty$-Banach, $\infty$-Born\'e and $\infty$-ind-Fr\'echet $\infty$-rings  and $\infty$-functors into $\infty$-groupoid (as in the work of Bambozzi-Ben-Bassat-Kremnizer) in some intrinsic way by using the corresponding infinitesimal stacks and crystalline stacks. The two directions of generalization will intersect at Huber's book in the strongly noetherian situation. 
\end{abstract}

\footnotetext[1]{Version: Feb 21 2021.}
\footnotetext[2]{2020 Mathematics Subject Classification Number: 14G45, 14G99.}
\footnotetext[3]{Keywords and Phrases: Intrinsic Morphisms, Topologization, Functional Analysis of Locally Convex spaces, Geometric Grothendieck Sites, Stackization.}


\tableofcontents


\section{Introduction}

\subsection{Main Consideration}

\noindent Scholze's diamond is actually very general notion beyond the corresponding perfectoid spaces, partially because it contains the corresponding diamantine spaces after Hansen-Kedlaya. This point of view certainly gives the motivation for this notion from Hansen-Kedlaya. Therefore one could regard to some extent diamantine spaces as giving some (maybe better to say more ring theoretic) analogs of the corresponding Scholze's diamonds, moreover they behave as if they are perfectoid spaces. Similar discussion could be made to suosperfectoid space, which should be more 'perfectoid' generalization.\\

\indent Hansen-Kedlaya \cite{HK} have given the definition of naive \'etale morphisms among any Tate Huber pairs namely these are locally composites of the rational localizations and finite \'etale morphisms (very importantly with strongly sheafy domains and targets). This is because one definitely believes that correct notion of \'etale morphism should admit such admissible decomposition and factorization. However what should be the correct intrinsic one has not been given in full detail yet. In the significant strongly sheafy situation, we are going to try to answer this question as proposed in \cite[Appendix 5]{Ked1}. In this paper, we try to study the corresponding properties of the corresponding naive \'etale morphisms along the ideas of \cite[Appendix 5]{Ked1} and \cite{HK}. The goal is to accurately characterize the corresponding naive \'etale morphisms in some intrinsic way.\\

\indent Sheafiness plays a very crucial role in the discussion above. However suppose we do not have to worry about the sheafiness at all (in fact in some sense we really do not have to worry about this at all by the work of Clausen-Scholze \cite{CS} and Bambozzi-Kremnizer \cite{BK}), then one might want to believe that the robust definitions could be made even more robust. Therefore we investigate the corresponding morphisms of the corresponding ring objects where sheafiness could be replaced by $\infty$-sheafiness (namely sheafiness up to higher homotopy) after \cite{BBBK} and \cite{BK}. One should be able to consider Clausen-Scholze's foundation \cite{CS} as well, however we will mainly focus on the $\infty$-locally convex objects in \cite{BBBK} and \cite{BK}, as in the corresponding schematic situation in \cite{Lu1}, \cite{Lu2}, \cite{TV1} and \cite{TV2}. We consider the corresponding interesting approaches through the corresponding formal and PD completions just as in the $\infty$-schematic situation in \cite{R} which is very related to the corresponding Drinfeld's stacky construction \cite{Dr1} and \cite{Dr2} by using the \v{C}ech-Alexander complex on the corresponding crystalline cohomology and prismatic cohomology.\\

\indent The current list of definitions will be established for discrete $\mathbb{E}_\infty$-ring objects and $\mathbb{E}_\infty$-ring objects in suitable locally convex $\infty$-categories after after Bambozzi-Ben-Bassat-Kremnizer \cite{BBBK}:\\

\indent D1. Localized intrinsic \'etale morphisms of open mapping Huber rings;\\
\indent D2. Localized intrinsic \'etale morphisms of open mapping adic Banach rings;\\ 
\indent D3. Localized intrinsic lisse morphisms of open mapping Huber rings;\\
\indent D4. Localized intrinsic lisse morphisms of open mapping adic Banach rings;\\
\indent D5. Localized intrinsic non-ramifi\'e morphisms of open mapping Huber rings;\\
\indent D6. Localized intrinsic non-ramifi\'e morphisms of open mapping adic Banach rings;\\
\indent $\infty$1. De Rham intrinsic \'etale-like morphisms of $\infty$-analytic functors;\\
\indent $\infty$2. De Rham intrinsic lisse-like morphisms of $\infty$-analytic functors;\\ 
\indent $\infty$3. De Rham intrinsic non-ramifi\'e-like morphisms of $\infty$-analytic functors;\\
\indent $\infty$4. PD (crystalline) intrinsic \'etale-like morphisms of $\infty$-analytic functors;\\
\indent $\infty$5. PD (crystalline) intrinsic lisse-like morphisms of $\infty$-analytic functors;\\ 
\indent $\infty$6. PD (crystalline) intrinsic non-ramifi\'e-like morphisms of $\infty$-analytic functors.\\

\indent Certainly for general locally convex spaces producing nice ring structures we really have to be very precise and accurate in any sorts of characterization. However we have not unfortunately achieve this due to some very subtle issues, mainly coming from the corresponding issues in very general functional analytification. That being otherwise all said, we still actually could literally talk about the desired definitions for simplicial noetherian Banach rings in certain situations.

\subsection{Further Consideration}

Our ultimate goal is certainly to study the corresponding geometric sites (\'etale, pro-\'etale, crystalline and prismatic \cite{SGAIV},\cite{Gro1},\cite{Sch1},\cite{KL1},\cite{KL2},\cite{BS}, \cite{Dr1},\cite{Dr2}) and the corresponding cohomologies (\'etale, pro-\'etale, crystalline and prismatic) for really general $\infty$-analytic spaces (possibly also noncommutative analogs of those in \cite{KR1}) over $\mathbb{F}_1$ and try to apply to the locally noetherian situations, the strongly noetherian situations (such as in \cite{G1}, \cite{GL}), the strongly sheafy situations under the foundation of $\infty$-locally convex spaces (as in \cite{HK}, \cite{KL1} and more general situations), although our very beginning corresponding motivation for this article is an attempt to answer some questions in \cite[Appendix A5]{Ked1}.

\

\section{Affinoid Morphisms of Huber Rings}

\noindent We start with the discussion on the corresponding intrinsic definition of \'etale morphisms.

\begin{setting}
We start with an analytic uniform Huber pair $(A,A^+)$. And we will consider the category of all such rings. We assume the corresponding completeness for the Huber pairs.
\end{setting}

\begin{definition}\mbox{\bf{(Hansen-Kedlaya \cite[Definition 5.1]{HK})}}
We call a map of Huber rings $(A,A^+)\rightarrow (B,B^+)$ naive \'etale after \cite[Definition 5.1]{HK} if it admit a factorization into rational localizations and finite \'etale morphisms. Here we assume $(A,A^+)$ is strong sheafy and we assume that $(B,B^+)$ is strongly sheafy. \footnote{Certainly one needs to be careful since we are now considering more general context than \cite{HK} without assuming the corresponding Tateness.}	
\end{definition}

\begin{definition}\mbox{\bf{(Kedlaya \cite[Definition A5.2]{Ked1})}}\label{definition2.3}
Recall from \cite[Definition A5.2]{Ked1}, we have the corresponding affinoid morphism from any strongly sheafy Huber ring $A$, namely a morphism $A\rightarrow B$, such that $B$ admits some surjective covering from $A\left<T_1,...,T_d\right>$ and through this map we have that $B$ is a stably-pseudocoherent sheaf over $A\left<T_1,...,T_d\right>$ and the corresponding ring $B$ is assumed to be sheafy \footnote{Cetainly one needs to be more careful since this is also slightly different from the original definition \cite[Definition A5.2]{Ked1}, thanks Professor Kedlaya for telling me this should be better. We want to mention that this is a quite subtle point around the sheafiness (see \cite[Theorem 1.4.20]{Ked1}), the point here is that we do not know the kernel of an affinoid morphism is closed or not, if it is closed then we could keep the knowledge that $B$ being stably-pseudocoherent is equivalent to $B$ being sheafy. However if this is not closed, then we do not have this sort of equivalence to our knowledge.}.

\end{definition}

\indent The belief (as proposed in \cite[Problem A5.3, Problem A5.4]{Ked1}) is that somehow the corresponding affinoid morphisms in the definition should be directly used in the corresponding definitions of lisse morphisms and unramified morphisms, as well as certainly the \'etale morphisms. To investigate this kind of idea, we are going to first investigate the corresponding naive \'etale morphisms along this idea.\\

\begin{lemma}
Let $f_1:\Gamma_1\rightarrow \Gamma_2$ and $f_2:\Gamma_2\rightarrow \Gamma_3$ be two affinoid morphisms, then the composition $f_2\circ f_1$ is also affinoid.	
\end{lemma}

\begin{proof}
Straightforward.	
\end{proof}


\begin{lemma}\mbox{\bf{(Kedlaya)}} \label{lemma2.5}
For any standard binary rational localization of $A$ with respect to $f,g\in A$, suppose we know that there are two surjective morphisms:
\begin{align}
s_1:A\left<\frac{f}{g}\right>\left<T_1,...,T_n\right>\rightarrow B\left<\frac{f}{g}\right>,\\
s_2:A\left<\frac{g}{f}\right>\left<T_1,...,T_{n}\right>\rightarrow B\left<\frac{g}{f}\right>.	
\end{align}
Then we have that there is a surjective morphism:
\begin{align}
s:A\left<T_1,...,T_{n'}\right>\rightarrow B.
\end{align}	
\end{lemma}

\begin{proof}
The following argument is due to Kedlaya \footnote[1]{Thanks Professor Kedlaya for mentioning the similarity of this to the corresponding locality of morphisms of finite type as in Grothendieck's EGA I and II.}, we work out it for the convenience of the readers. First, we have the following short exact sequence:
\[
\xymatrix@C+0pc@R+0pc{
&0 \ar[r] \ar[r] \ar[r] &B \ar[r] \ar[r] \ar[r] &B\left<\frac{f}{g}\right>\bigoplus B\left<\frac{g}{f}\right> \ar[r] \ar[r] \ar[r] &B\left<\frac{f}{g},\frac{g}{f}\right> \ar[r] \ar[r] \ar[r] &0.
}
\]
Take any $b\in B$, and use the notation $(b_1,b_2)$ for the image in the middle. By the surjectivity of the maps $s_1,s_2$ we have that there exist some element $a_1\in A\left<\frac{f}{g}\right>\left<T_1,...,T_n\right>$ and some element $a_2\in A\left<\frac{f}{g}\right>\left<T_1,...,T_n\right>$ such that we have:
\begin{align}
s_1(a_1)=b_1,\\
s_2(a_2)=b_2.	
\end{align}
With more explicit expression we have the following:
\begin{align}
s_1(\sum_{i_1,...,i_n}\sum_{i} a_1^{i,i_1,...,i_n}u^iT_1^{i_1}...T_n^{i_n})=\sum_i b_1^iu^i,\\
s_2(\sum_{i_1,...,i_n}\sum_{i} a_2^{i,i_1,...,i_n}v^iT_1^{i_1}...T_n^{i_n})=\sum_i b_2^iv^i,	
\end{align}	
under the corresponding presentations up to liftings:
\begin{align}
B\left<\frac{f}{g}\right>=B\left<u\right>/(gu-f),\\
B\left<\frac{g}{f}\right>=B\left<v\right>/(fv-g).\\	
\end{align}
Then to finish we only have to take some finite sum in the summation to make approximation. We first claim that such finite sum approximation and modification will not change the corresponding surjectivity of the map $s_1$ and $s_2$. Namely for each $k=1,2$ the map $s_k$ will maintain surjective once we modify the image of $T_1,...,T_n$ infinitesimally around some neighbourhood $U$ of $0$, in other words it will maintain to be surjective even if we set $s_k(T_1),...,s_k(T_n)$ to be $x_1,...,x_n$ whenever $x_1-s_k(T_1),...,x_n-s_k(T_n)$ lives in the neighbourhood $U$, and moreover we have that the corresponding modification could be assumed to take $T_i$ to $x_i$ with $i=1,...,n$. By open mapping, we have that the corresponding lifts of the corresponding differences $x_1-s_k(T_1),...,x_n-s_k(T_n)$ could be made to be living in some arbitrarily chosen neighbourhood $V$ of $0$. Then we only have to consider the following map factoring through the corresponding map $s_k$:
\begin{align}
h: A_k\left<T_1,...,T_n\right>&\rightarrow A_k\left<T_1,...,T_n\right>\\
	T_i&\mapsto T_i+\mathrm{lifts~of}~x_i-s_k(T_k)
\end{align}
where $A_1$ is the ring $A\left<\frac{f}{g}\right>$ while we have $A_2$ is the ring $A\left<\frac{g}{f}\right>$, which basically proves the claim. Then this will indicate that one can find some joint finite subset $T:=\{T_1,...,T_{n'}\}$ for $B\left<\frac{f}{g}\right>$ and $B\left<\frac{f}{g}\right>$ such that the modified  
\begin{align}
s_1:A\left<\frac{f}{g}\right>\left<T_1,...,T_{n'}\right>\rightarrow B\left<\frac{f}{g}\right>,\\
s_2:A\left<\frac{g}{f}\right>\left<T_1,...,T_{n'}\right>\rightarrow B\left<\frac{g}{f}\right>,	
\end{align}
are basically surjective and they fit into the following commutative diagram:
\[\tiny
\xymatrix@C+0pc@R+3pc{
0 \ar[r] \ar[r] \ar[r] &A\left<T_1,...,T_{n'}\right> \ar[d] \ar[d] \ar[d] \ar[r] \ar[r] \ar[r] &A\left<\frac{f}{g}\right>\left<T_1,...,T_{n'}\right>\bigoplus A\left<\frac{g}{f}\right>\left<T_1,...,T_{n'}\right> \ar[d] \ar[d] \ar[d]\ar[r] \ar[r] \ar[r] &A\left<\frac{f}{g},\frac{g}{f}\right>\left<T_1,...,T_{n'}\right> \ar[d] \ar[d] \ar[d]\ar[r] \ar[r] \ar[r] &0\\
0 \ar[r] \ar[r] \ar[r] &B  \ar[r] \ar[r] \ar[r] &B\left<\frac{f}{g}\right>\bigoplus B\left<\frac{g}{f}\right> \ar[r] \ar[r] \ar[r] &B\left<\frac{f}{g},\frac{g}{f}\right> \ar[r] \ar[r] \ar[r] &0,
}
\]
where the middle and the rightmost vertical arrows are surjective. Then claim is then that the left vertical one is also surjective. The kernels $K_1\oplus K_2$ in the middle is mapped surjectively to the kernel $K_{12}$ of the rightmost vertical map. So the snake lemma will force the cokernel of the left vertical arrow to  be zero which shows the corresponding exactness at the corresponding location $?$ in the following commutative diagram:
\[\tiny
\xymatrix@C+0pc@R+3pc{
 &0 \ar[d] \ar[d] \ar[d]   &0 \ar[d] \ar[d] \ar[d]  &0 \ar[d] \ar[d] \ar[d] & \\
0 \ar[r] \ar[r] \ar[r]& K\ar[d] \ar[d] \ar[d]\ar[r] \ar[r] \ar[r]&K_1\bigoplus K_2 \ar[d] \ar[d] \ar[d]\ar[r] \ar[r] \ar[r]&K_{12} \ar[d] \ar[d] \ar[d]\ar[r] \ar[r] \ar[r]&\\
0 \ar[r] \ar[r] \ar[r] &A\left<T_1,...,T_{n'}\right> \ar[d] \ar[d] \ar[d] \ar[r] \ar[r] \ar[r] &A\left<\frac{f}{g}\right>\left<T_1,...,T_{n'}\right>\bigoplus A\left<\frac{g}{f}\right>\left<T_1,...,T_{n'}\right> \ar[d] \ar[d] \ar[d]\ar[r] \ar[r] \ar[r] &A\left<\frac{f}{g},\frac{g}{f}\right>\left<T_1,...,T_{n'}\right> \ar[d] \ar[d] \ar[d]\ar[r] \ar[r] \ar[r] &0\\
0 \ar[r] \ar[r] \ar[r] &B  \ar[d]^? \ar[d] \ar[d]\ar[r] \ar[r] \ar[r] &B\left<\frac{f}{g}\right>\bigoplus B\left<\frac{g}{f}\right> \ar[d] \ar[d] \ar[d]\ar[r] \ar[r] \ar[r] &B\left<\frac{f}{g},\frac{g}{f}\right> \ar[d] \ar[d] \ar[d]\ar[r] \ar[r] \ar[r] &0\\
 &0   &0  &0 &
}
\]
where $K_1,K_2,K_{12}$ are pseudocoherent, which implies that the corresponding module $K$ is also pseudocoherent.

\end{proof}

\begin{proposition} \label{proposition2.5}
Any naive \'etale morphism is affinoid.
\end{proposition}

\begin{proof}
First let $f:(A,A^+)\rightarrow (B,B^+)$ be any naive \'etale morphism. Then locally this is basically composition of the corresponding rational localizations and finite \'etale maps. Locally rational localizations involved are actually affinoid, and locally the corresponding finite \'etale maps from strongly sheafy rings will have strongly sheafy target, which will imply that locally finite \'etale maps are affinoid. Then this could be globalized to force the global map $f$ to be affinoid. The properties of factoring through a surjection globally could be proved by glueing local ones through \cref{lemma2.5}, by considering \cite[Proposition 2.4.20]{KL1}. And globally the corresponding ring $B$ is stably-pseudocoherent over $A\left<T_1,...,T_n\right>$ for some $n$ since this is a local property. 
\end{proof}

\indent Therefore we have proved that the corresponding \'etale maps in the corresponding naive sense is actually affinoid in the above sense. Therefore it is now natural to try to find the corresponding properties which may completely characterize the corresponding naive \'etale morphisms which are affinoid.

\indent Certainly we may have the corresponding conjectures that all the naive \'etale morphisms will satisfy the corresponding properties of algebraically \'etale ones (such as in \cite[Chapitre 17]{EGAIV4}, \cite[Tag 00U1]{SP}). We now discuss the corresponding completed cotangent complex after Huber \cite[1.6.2]{Hu1}. Recall for our current $B$ the corresponding completed differential $\Omega^1_{B/A,\mathrm{topo}}$ (see \cite[1.6.2]{Hu1} for the construction for any $f$-adic rings in the noetherian setting). Therefore we consider the corresponding topological naive cotangent complex:
\begin{align}
\tau_{\leq 1}\mathbb{L}_{B/A,\mathrm{topo}}	
\end{align}
for any naive \'etale map $f:(A,A^+)\rightarrow (B,B^+)$. We now discuss the construction without the corresponding strongly noetherian requirement in our current situation. First we know that $B$ is of topologically finite type over $A$:
\begin{align}
B=A\left<X_1,...,X_n\right>_{T_1,...,T_n}/I.	
\end{align}
Then we could first define the topological free differentials:
\begin{align}
\Omega^1:=A\left<X_1,...,X_n\right>_{T_1,...,T_n}dX_1+...+A\left<X_1,...,X_n\right>_{T_1,...,T_n}dX_n.	
\end{align}
Then we have:
\begin{align}
\Omega^1_{B/A,\mathrm{topo}}:=	\Omega^1\slash (I\bigcup d(I))\Omega^1.
\end{align}

Here everything is assumed to be basically complete with respect to the corresponding natural topology. Namely we need to take the corresponding completion always with respect to the corresponding induced topology. Certainly here $\Omega^1$ is already complete due to the fact that it is finitely projective. Recall that a map $f:\Gamma_1\rightarrow \Gamma_2$ is called \'etale in the scheme theory if the naive cotangent complex (truncated and could be regarded as an $\infty$-module spectrum) is quasi-isomorphic to zero. The corresponding underlying complex reads:
\[
\xymatrix@C+0pc@R+0pc{
[I/I^2\ar[r] \ar[r] \ar[r] &\Omega^1_{\Gamma_2/\Gamma_1,\mathrm{topo}}].
}
\]

\indent In the situation where we consider $A\rightarrow B$ is affinoid, the corresponding ideal $I$ is actually stably-pseudocoherent over $A\left<X_1,...,X_n\right>$. It is peudocoherent by the corresponding two out of three property. The stability holds locally, so we have the case. And if the morphism if furthermore naive \'etale then we have $I/I^2$ is also stably-pseudocoherent, see \cref{lemma4.8}. 

\begin{remark}
Note that we are considering the very general and complicated non-noetherian situation, modules will need to be endowed with the natural topology and complete, although finite projective modules are complete automatically. This will have nontrivial things to do with the corresponding definition of $\Omega^1_{B/A,\mathrm{topo}}$.	
\end{remark}

\indent One can actually generalize the corresponding full cotangent complexes and derived de Rham complexes to this topological context following \cite{III1}, \cite{III2} and \cite{B1}. First for the corresponding topological cotangent complex we consider the following definition (note that we have to assume the corresponding topologically finite type condition). We start with the corresponding algebraic ones for $B^h=A[X_1,...,X_n]_{T_1,...,T_n}/I$, under the topologization we have the corresponding derived cotangent complex:
\begin{align}
\mathbb{L}_{B^h/A,\mathrm{alg}},	
\end{align}
by taking the usual algebraic one. Then we take the corresponding completion with respect to the corresponding topologization which gives rise to the following topological one:
\begin{align}
\mathbb{L}_{B/A,\mathrm{topo}}.	
\end{align}

We define the corresponding de Rham complex in the following parallel way. What is happen is that consider the presentation $B^h=A[X_1,...,X_n]_{T_1,...,T_n}/I$ which gives rise to the corresponding algebraic de Rham complex:
\[
\xymatrix@C+0pc@R+0pc{
0\ar[r]\ar[r]\ar[r] &B^h \ar[r]\ar[r]\ar[r] & \Omega^1_{B^h/A,\mathrm{alg}}\ar[r]\ar[r]\ar[r] & \Omega^2_{B^h/A,\mathrm{alg}}  \ar[r]\ar[r]\ar[r] &...\ar[r]\ar[r]\ar[r] & \Omega^\bullet_{B^h/A,\mathrm{alg}} \ar[r]\ar[r]\ar[r] &..., 
}
\]
which will give rise to the corresponding topological one if we take the corresponding completion induced from the subset $T_1,...,T_n$:
\[
\xymatrix@C+0pc@R+0pc{
0\ar[r]\ar[r]\ar[r] &B \ar[r]\ar[r]\ar[r] & {\Omega}^1_{B^h/A,\mathrm{topo}}\ar[r]\ar[r]\ar[r] & {\Omega}^2_{B^h/A,\mathrm{topo}}  \ar[r]\ar[r]\ar[r] &...\ar[r]\ar[r]\ar[r] & {\Omega}^\bullet_{B^h/A,\mathrm{topo}} \ar[r]\ar[r]\ar[r] &.... 
}
\]	

From our construction for ${\Omega}^1_{B^h/A,\mathrm{topo}}$, one can actually define:
\begin{align}
{\Omega}^{\bullet,\mathrm{f}}_{B^h/A,\mathrm{topo}}:=\bigoplus_{i_1,...,i_\bullet\in \{1,...,n\}}A\left<X_1,...,X_n\right>_{T_1,...,T_n} dX_{i_1}\wedge dX_{i_2}\wedge...\wedge dX_{i_\bullet}
\end{align}
and then define:
\begin{align}
{\Omega}^{\bullet}_{B/A,\mathrm{topo}}:=\left(\bigoplus_{i_1,...,i_\bullet\in \{1,...,n\}}A\left<X_1,...,X_n\right>_{T_1,...,T_n} dX_{i_1}\wedge dX_{i_2}\wedge...\wedge dX_{i_\bullet}\right)\slash\\
\left((I\bigcup dI \bigcup d^\bullet I)\bigoplus_{i_1,...,i_\bullet\in \{1,...,n\}}A\left<X_1,...,X_n\right>_{T_1,...,T_n} dX_{i_1}\wedge dX_{i_2}\wedge...\wedge dX_{i_\bullet}\right),
\end{align}
after taking suitable completion when needed \footnote[1]{As in \cite{B1} and \cite{GL} where one takes the corresponding derived $p$-completion out from the algebraic cotangent complex and the corresponding derived algebraic de Rham complex.}.

\

\section{Affinoid Morphisms of Banach Rings}

\noindent We now consider the parallel situation of Banach rings.

\begin{setting}
We start with a uniform adic Banach ring $(A,A^+)$ in the general sense of \cite{KL1} and \cite{KL2} (without assumption on the topologically nilpotent units being existing, but we assume this is open mapping). And we will consider the category of all such rings. We assume the corresponding completeness as well.
\end{setting}

\begin{definition}\mbox{\bf{(Hansen-Kedlaya \cite[Definition 5.1]{HK})}}
We call a map of adic Banach rings $(A,A^+)\rightarrow (B,B^+)$ naive \'etale after \cite[Definition 5.1]{HK} if it admit a factorization into rational localizations and finite \'etale morphisms. Here we assume $(A,A^+)$ is strong sheafy and we assume that $(B,B^+)$ is sheafy.	
\end{definition}

\begin{remark}
Certainly this is in more general situation than the corresponding context of \cite{HK}.	
\end{remark}

\begin{definition}\mbox{\bf{(Kedlaya \cite[Definition A5.2]{Ked1})}}\label{definition2.3}
Recall from \cite[Definition A5.2]{Ked1}, we have the corresponding affinoid morphism from any strongly sheafy adic Banach ring $A$, namely a morphism $A\rightarrow B$, such that $B$ admits some surjective covering from $A\left<T_1,...,T_d\right>$ and through this map we have that $B$ is a stably-pseudocoherent sheaf over $A\left<T_1,...,T_d\right>$ and we assume that $(B,B^+)$ is strongly sheafy.

\end{definition}

\begin{remark}
Of course the corresponding foundation is not the same but parallel to such situation we are considering now, however it is definitely reasonable and parallel to follow \cite[Appendix A5]{Ked1} to give the definition here.	
\end{remark}

\indent The belief (as proposed in \cite[Problem A5.3, Problem A5.4]{Ked1}) is that somehow the corresponding affinoid morphisms in the definition should be directly used in the corresponding definitions of lisse morphisms and unramified morphisms, as well as certainly the \'etale morphisms. To investigate this kind of idea, we are going to first investigate the corresponding naive \'etale morphisms along this idea.\\

\begin{lemma}\mbox{\bf{(Kedlaya)}} \label{lemma2.5}
For any standard binary rational localization of $A$ with respect to $f,g\in A$, suppose we know that there are two surjective morphisms:
\begin{align}
s_1:A\left<\frac{f}{g}\right>\left<T_1,...,T_n\right>\rightarrow B\left<\frac{f}{g}\right>,\\
s_2:A\left<\frac{g}{f}\right>\left<T_1,...,T_{n}\right>\rightarrow B\left<\frac{g}{f}\right>.	
\end{align}
Then we have that there is a surjective morphism:
\begin{align}
s:A\left<T_1,...,T_{n'}\right>\rightarrow B.
\end{align}	
\end{lemma}

\begin{proof}
The following argument is due to Kedlaya, we work out it for the convenience of the readers. And this is the corresponding Banach analog of corresponding parallel result in the Huber ring situation. First, we have the following short exact sequence:
\[
\xymatrix@C+0pc@R+0pc{
&0 \ar[r] \ar[r] \ar[r] &B \ar[r] \ar[r] \ar[r] &B\left<\frac{f}{g}\right>\bigoplus B\left<\frac{g}{f}\right> \ar[r] \ar[r] \ar[r] &B\left<\frac{f}{g},\frac{g}{f}\right> \ar[r] \ar[r] \ar[r] &0.
}
\]
Take any $b\in B$, and use the notation $(b_1,b_2)$ for the image in the middle. By the surjectivity of the maps $s_1,s_2$ we have that there exist some element $a_1\in A\left<\frac{f}{g}\right>\left<T_1,...,T_n\right>$ and some element $a_2\in A\left<\frac{f}{g}\right>\left<T_1,...,T_n\right>$ such that we have:
\begin{align}
s_1(a_1)=b_1,\\
s_2(a_2)=b_2.	
\end{align}
With more explicit expression we have the following:
\begin{align}
s_1(\sum_{i_1,...,i_n}\sum_{i} a_1^{i,i_1,...,i_n}u^iT_1^{i_1}...T_n^{i_n})=\sum_i b_1^iu^i,\\
s_2(\sum_{i_1,...,i_n}\sum_{i} a_2^{i,i_1,...,i_n}v^iT_1^{i_1}...T_n^{i_n})=\sum_i b_2^iv^i,	
\end{align}	
under the corresponding presentations up to liftings:
\begin{align}
B\left<\frac{f}{g}\right>=B\left<u\right>/(gu-f),\\
B\left<\frac{g}{f}\right>=B\left<v\right>/(fv-g).\\	
\end{align}
Then to finish we only have to take some finite sum in the summation to make approximation. We first claim that such finite sum approximation and modification will not change the corresponding surjectivity of the map $s_1$ and $s_2$. Namely for each $k=1,2$ the map $s_k$ will maintain surjective once we modify the image of $T_1,...,T_n$ infinitesimally around some neighbourhood $U$ of $0$, in other words it will maintain to be surjective even if we set $s_k(T_1),...,s_k(T_n)$ to be $x_1,...,x_n$ whenever $\|x_1-s_k(T_1)\|\leq \delta,...,\|x_n-s_k(T_n)\|\leq \delta$ for some prescribed constant $\delta<1$ and moreover we have that the corresponding modification could be assumed to take $T_i$ to $x_i$ with $i=1,...,n$. By open mapping in this current context, we have that the corresponding lifts of the corresponding differences $x_1-s_k(T_1),...,x_n-s_k(T_n)$ could be made to be living in some arbitrarily chosen neighbourhood $V$ of $0$ namely, we can find lifts $y_1,...,y_n$ of these differences such that
\begin{align}
\|y_1\|<1,...,\|y_n\|<1.	
\end{align}
Then we only have to consider the following map factors through the corresponding map $s_k$:
\begin{align}
h: A_k\left<T_1,...,T_n\right>&\rightarrow A_k\left<T_1,...,T_n\right>\\
	T_i&\mapsto T_i+\mathrm{lifts~of}~x_i-s_k(T_k)
\end{align}
where $A_1$ is the ring $A\left<\frac{f}{g}\right>$ while we have $A_2$ is the ring $A\left<\frac{g}{f}\right>$, which basically proves the claim. Then this will indicate that one can find some joint finite subset $T:=\{T_1,...,T_{n'}\}$ for $B\left<\frac{f}{g}\right>$ and $B\left<\frac{f}{g}\right>$ such that the modified  
\begin{align}
s_1:A\left<\frac{f}{g}\right>\left<T_1,...,T_{n'}\right>\rightarrow B\left<\frac{f}{g}\right>,\\
s_2:A\left<\frac{g}{f}\right>\left<T_1,...,T_{n'}\right>\rightarrow B\left<\frac{g}{f}\right>,	
\end{align}
are basically surjective and they fit into the following commutative diagram:
\[\tiny
\xymatrix@C+0pc@R+3pc{
0 \ar[r] \ar[r] \ar[r] &A\left<T_1,...,T_{n'}\right> \ar[d] \ar[d] \ar[d] \ar[r] \ar[r] \ar[r] &A\left<\frac{f}{g}\right>\left<T_1,...,T_{n'}\right>\bigoplus A\left<\frac{g}{f}\right>\left<T_1,...,T_{n'}\right> \ar[d] \ar[d] \ar[d]\ar[r] \ar[r] \ar[r] &A\left<\frac{f}{g},\frac{g}{f}\right>\left<T_1,...,T_{n'}\right> \ar[d] \ar[d] \ar[d]\ar[r] \ar[r] \ar[r] &0\\
0 \ar[r] \ar[r] \ar[r] &B  \ar[r] \ar[r] \ar[r] &B\left<\frac{f}{g}\right>\bigoplus B\left<\frac{g}{f}\right> \ar[r] \ar[r] \ar[r] &B\left<\frac{f}{g},\frac{g}{f}\right> \ar[r] \ar[r] \ar[r] &0,
}
\]
where the middle and the rightmost vertical arrows are surjective. Then claim is then that the left vertical one is also surjective. The kernels $K_1\oplus K_2$ in the middle is mapped surjectively to the kernel $K_{12}$ of the rightmost vertical map. So the snake lemma will force the cokernel of the left vertical arrow to  be zero which shows the corresponding exactness at the corresponding location $?$ in the following commutative diagram:
\[\tiny
\xymatrix@C+0pc@R+3pc{
 &0 \ar[d] \ar[d] \ar[d]   &0 \ar[d] \ar[d] \ar[d]  &0 \ar[d] \ar[d] \ar[d] & \\
0 \ar[r] \ar[r] \ar[r]& K\ar[d] \ar[d] \ar[d]\ar[r] \ar[r] \ar[r]&K_1\bigoplus K_2 \ar[d] \ar[d] \ar[d]\ar[r] \ar[r] \ar[r]&K_{12} \ar[d] \ar[d] \ar[d]\ar[r] \ar[r] \ar[r]&\\
0 \ar[r] \ar[r] \ar[r] &A\left<T_1,...,T_{n'}\right> \ar[d] \ar[d] \ar[d] \ar[r] \ar[r] \ar[r] &A\left<\frac{f}{g}\right>\left<T_1,...,T_{n'}\right>\bigoplus A\left<\frac{g}{f}\right>\left<T_1,...,T_{n'}\right> \ar[d] \ar[d] \ar[d]\ar[r] \ar[r] \ar[r] &A\left<\frac{f}{g},\frac{g}{f}\right>\left<T_1,...,T_{n'}\right> \ar[d] \ar[d] \ar[d]\ar[r] \ar[r] \ar[r] &0\\
0 \ar[r] \ar[r] \ar[r] &B  \ar[d]^? \ar[d] \ar[d]\ar[r] \ar[r] \ar[r] &B\left<\frac{f}{g}\right>\bigoplus B\left<\frac{g}{f}\right> \ar[d] \ar[d] \ar[d]\ar[r] \ar[r] \ar[r] &B\left<\frac{f}{g},\frac{g}{f}\right> \ar[d] \ar[d] \ar[d]\ar[r] \ar[r] \ar[r] &0\\
 &0   &0  &0 &
}
\]
where $K_1,K_2,K_{12}$ are pseudocoherent, which implies that the corresponding module $K$ is also pseudocoherent.

\end{proof}

\begin{lemma}
Let $f_1:\Gamma_1\rightarrow \Gamma_2$ and $f_2:\Gamma_2\rightarrow \Gamma_3$ be two affinoid morphisms, then the composition $f_2\circ f_1$ is also affinoid.	
\end{lemma}

\begin{proof}
Straightforward.	
\end{proof}

\begin{proposition} \label{proposition2.5}
Any naive \'etale morphism is affinoid.
\end{proposition}

\begin{proof}
See \cref{proposition2.5}.
\end{proof}

\indent Therefore as in the corresponding Huber pair situation, we have proved that the corresponding \'etale maps in the corresponding naive sense is actually affinoid in the above sense. Therefore it is now natural to try to find the corresponding properties which may completely characterize the corresponding naive \'etale morphisms which are affinoid.

\indent Certainly we may have the corresponding conjectures that all the naive \'etale morphisms will satisfy the corresponding properties of algebraically \'etale ones (such as in \cite[Chapitre 17]{EGAIV4}, \cite[Tag 00U1]{SP}). We now discuss the corresponding Banach completed cotangent complex after Huber \cite[1.6.2]{Hu1}. Recall for our current $B$ the corresponding completed differential $\Omega^1_{B/A,\mathrm{topo}}$ (see \cite[1.6.2]{Hu1} for the construction for any $f$-adic rings). Certainly in the context of adic Banach rings we have the parallel completed version of the differentials by taking Banach completion, which is in some trival way the current situation. Therefore we consider the corresponding topological naive cotangent complex:
\begin{align}
\tau_{\leq 1}\mathbb{L}_{B/A,\mathrm{topo}}	
\end{align}
for any naive \'etale map $f:(A,A^+)\rightarrow (B,B^+)$. We consider the construction without the corresponding strongly noetherian requirement in our current situation. First we know that $B$ is of topologically finite type over $A$:
\begin{align}
B=A\left<X_1,...,X_n\right>_{T_1,...,T_n}/I.	
\end{align}
Then we could first define the Banach free differentials:
\begin{align}
\Omega^1:=A\left<X_1,...,X_n\right>_{T_1,...,T_n}dX_1+...+A\left<X_1,...,X_n\right>_{T_1,...,T_n}dX_n.	
\end{align}
Then we have:
\begin{align}
\Omega^1_{B/A,\mathrm{topo}}:=	\Omega^1\slash (I\bigcup d(I))\Omega^1.
\end{align}

Here everything is assumed to be basically complete with respect to the corresponding natural topology. Namely we need to take the corresponding completion always with respect to the corresponding induced norms. Certainly here $\Omega^1$ is already complete due to the fact that it is finitely projective. Recall that a map $f:\Gamma_1\rightarrow \Gamma_2$ is called \'etale in the scheme theory if the naive cotangent complex (truncated and could be regarded as an $\infty$-module spectrum) is quasi-isomorphic to zero. The corresponding underlying complex reads:
\[
\xymatrix@C+0pc@R+0pc{
[I/I^2\ar[r] \ar[r] \ar[r] &\Omega^1_{\Gamma_2/\Gamma_1,\mathrm{topo}}].
}
\]

\indent In the situation where we consider $A\rightarrow B$ is affinoid, the corresponding ideal $I$ is actually stably-pseudocoherent over $A\left<T_1,...,T_n\right>$. It is peudocoherent by the corresponding two out of three property. The stability holds locally, so we have the case. And if the morphism if furthermore naive \'etale then we have $I/I^2$ is also stably-pseudocoherent, see \cref{lemma4.8}. 

\begin{remark}
Note that we are considering the very general and complicated non-noetherian situation, modules will need to be endowed with the natural topology coming from the Banach structures on the Banach rings and complete, although in this situation as well finite projective modules are complete automatically. This will have nontrivial things to do with the corresponding definition of $\Omega^1_{B/A,\mathrm{topo}}$.	
\end{remark}

\indent In the Banach world, one can actually generalize the corresponding full cotangent complexes and de Rham complex to this context. First for the corresponding topological cotangent complex we consider the following definition (note that we have to assume the corresponding topologically finite type condition). We start with the corresponding algebraic ones for $B^h=A[X_1,...,X_n]_{T_1,...,T_n}/I$, under the topologization we have the corresponding derived cotangent complex:
\begin{align}
\mathbb{L}_{B^h/A,\mathrm{alg}},	
\end{align}
by taking the usual algebraic one. Then we take the corresponding completion with respect to the corresponding topologization which gives rise to the following topological one:
\begin{align}
\mathbb{L}_{B/A,\mathrm{topo}}.	
\end{align}

We define the corresponding de Rham complex in the following way parallely. What is happen is that consider the presentation $B^h=A[X_1,...,X_n]_{T_1,...,T_n}/I$ which gives rise to the corresponding algebraic de Rham complex:
\[
\xymatrix@C+0pc@R+0pc{
0\ar[r]\ar[r]\ar[r] &B^h \ar[r]\ar[r]\ar[r] & \Omega^1_{B^h/A,\mathrm{alg}}\ar[r]\ar[r]\ar[r] & \Omega^2_{B^h/A,\mathrm{alg}}  \ar[r]\ar[r]\ar[r] &...\ar[r]\ar[r]\ar[r] & \Omega^\bullet_{B^h/A,\mathrm{alg}} \ar[r]\ar[r]\ar[r] &..., 
}
\]
which will give rise to the corresponding topological one if we take the corresponding completion under $(\|.\|,\mathrm{Ban})$ induced from the subset $T_1,...,T_n$:
\[
\xymatrix@C+0pc@R+0pc{
0\ar[r]\ar[r]\ar[r] &B^h_{\|.\|,\mathrm{Ban}} \ar[r]\ar[r]\ar[r] & \Omega^1_{B^h/A,\mathrm{alg},\|.\|,\mathrm{Ban}}\ar[r]\ar[r]\ar[r] & \Omega^2_{B^h/A,\mathrm{alg},\|.\|,\mathrm{Ban}}  \ar[r]\ar[r]\ar[r] &...\ar[r]\ar[r]\ar[r] & \Omega^\bullet_{B^h/A,\mathrm{alg},\|.\|,\mathrm{Ban}} \ar[r]\ar[r]\ar[r] &..., \\
0\ar[r]\ar[r]\ar[r] &B \ar[r]\ar[r]\ar[r] & {\Omega}^1_{B^h/A,\mathrm{topo}}\ar[r]\ar[r]\ar[r] & {\Omega}^2_{B^h/A,\mathrm{topo}}  \ar[r]\ar[r]\ar[r] &...\ar[r]\ar[r]\ar[r] & {\Omega}^\bullet_{B^h/A,\mathrm{topo}} \ar[r]\ar[r]\ar[r] &.... 
}
\]	
From our construction for ${\Omega}^1_{B^h/A,\mathrm{topo}}$, one can actually define:
\begin{align}
{\Omega}^{\bullet,\mathrm{f}}_{B^h/A,\mathrm{topo}}:=\bigoplus_{i_1,...,i_\bullet\in \{1,...,n\}}A\left<X_1,...,X_n\right>_{T_1,...,T_n} dX_{i_1}\wedge dX_{i_2}\wedge...\wedge dX_{i_\bullet}
\end{align}
and then define:
\begin{align}
{\Omega}^{\bullet}_{B/A,\mathrm{topo}}:=\left(\bigoplus_{i_1,...,i_\bullet\in \{1,...,n\}}A\left<X_1,...,X_n\right>_{T_1,...,T_n} dX_{i_1}\wedge dX_{i_2}\wedge...\wedge dX_{i_\bullet}\right)\slash\\
\left((I\bigcup dI \bigcup d^\bullet I)\bigoplus_{i_1,...,i_\bullet\in \{1,...,n\}}A\left<X_1,...,X_n\right>_{T_1,...,T_n} dX_{i_1}\wedge dX_{i_2}\wedge...\wedge dX_{i_\bullet}\right),
\end{align}
after taking suitable completion under Banach norms when needed. One can also follow the construction in \cite{III1}, \cite{III2}, \cite{B1} and \cite{GL} to first consider the corresponding polynomial resolution $P_\bullet$ for $B$, then consider the corresponding algebraic derived de Rham complex $\Omega^*_{P_\bullet/A,\mathrm{alg}}$, then take the corresponding Banach completion to produce the corresponding topological one $\Omega^*_{P_\bullet/A,\mathrm{topo}}$. Then as in \cite{III1}, \cite{III2}, \cite{GL} and \cite{B1} around analytic derived $p$-adic de Rham complex we can take the corresponding suitable derived filtered completion to get the complex $\widehat{\Omega}^{\bullet}_{B/A,\mathrm{topo}}$ (certainly we need to consider Banach version of some filtered derived category of simplicial Banach rings). In the pro-\'etale site theoretic setting for rigid spaces (namely the corresponding p-complete context) this recovers the corresponding construction in \cite{GL}. Recall in \cite{GL} the corresponding analytic derived $p$-adic de Rham complex is constructed by first define the integral version ${\Omega}^{\bullet}_{B_0/A_0,\mathrm{topo}}$, and then take the colimit throughout all such rings of definition, and invert $p$, and then take the filtered completion. 

\begin{remark}
Certainly after this previous discussion we can construct the corresponding Banach derived de Rham complex, Banach cotangent complex and Banach Andr\'e-Quillen homology for any morphism $A\rightarrow B$ of Banach rings admissible in our situation. Recall from \cite{III1}, \cite{III2}, \cite{B1} we have the corresponding algebraic $p$-adic derived cotangent complex:
\begin{align}
\Omega^1_{A[B]^\bullet/A}\otimes_{A[B]^\bullet}B
\end{align}
where $A[B]^\bullet$ is just the corresponding standard cofibrant replacement (in the topology theoretic language) of $B/A$. Then we take the corresponding derived completion \footnote{The derived completion in our Banach situation is the derived Banach completion which for instance could happen by using the 'completion' functor from $\mathrm{Simp}(\mathrm{Ind}(\mathrm{NormSets}))$ to $\mathrm{Simp}(\mathrm{Ind}(\mathrm{BanachSets}))$ literally in \cite{BBBK}.} under the induced norm to achieve the corresponding topological one:
\begin{align}
\mathbb{L}_{B/A,\mathrm{topo}}:=(\Omega^1_{A[B]^\bullet/A}\otimes_{A[B]^\bullet}B)^\wedge_{\|.\|}.
\end{align}
We have the corresponding algebraic $p$-adic derived de Rham complex:
\begin{align}
\Omega^\bullet_{A[B]^\bullet/A}
\end{align}
where $A[B]^\bullet$ is just the corresponding standard cofibrant replacement (in the topology theoretic language) of $B/A$. Then we take the corresponding completion under the induced norm to achieve the corresponding topological one:
\begin{align}
\mathbb{\mathrm{dR}}_{B/A,\mathrm{topo}}:=(\Omega^\bullet_{A[B]^\bullet/A})^\wedge_{\|.\|},
\end{align}
carrying certain filtration $\mathrm{Fil}^*_{\mathbb{\mathrm{dR}}_{B/A,\mathrm{topo}}}$, which allows one take the corresponding filtered completion to achieve the corresponding final object:
\begin{align}
\mathbb{\mathrm{dR}}^\wedge_{B/A,\mathrm{topo}}:=(\Omega^\bullet_{A[B]^\bullet/A})^\wedge_{\|.\|,\mathrm{Fil}^*_{\mathbb{\mathrm{dR}}_{B/A,\mathrm{topo}}}}.
\end{align}
\end{remark}

\

\section{Naive \'Etale Morphisms and Intrinsic \'Etale Morphisms}

\indent As discussed above we now study the corresponding properties of naive \'etale morphisms aiming at the corresponding characterization of the corresponding correct definitions of intrinsic \'etale morphisms. We now assume the corresponding analyticity of the adic rings. 

\begin{lemma}
Let $f:(A,A^+)\rightarrow (B,B^+)$ be any rational localization map. Then we have that $B$ is of topologically finite type.
\end{lemma}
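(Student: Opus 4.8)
The plan is to prove the statement by simply unwinding the definition of a rational localization and exhibiting $B$ as a quotient of a restricted power series ring, which is the prototypical topologically finite type object. First I would recall that, following the conventions of \cite{KL1} (or Huber's original construction for $f$-adic rings), the rational localization attached to elements $f_1,\dots,f_m,g\in A$ generating an open ideal is characterized by the universal property of inverting $g$ while rendering each $f_i/g$ power-bounded, and that this universal object admits the concrete presentation
\begin{align}
B=A\left<\frac{f_1}{g},\dots,\frac{f_m}{g}\right>=A\left<T_1,\dots,T_m\right>\slash \overline{(gT_1-f_1,\dots,gT_m-f_m)},
\end{align}
where the overline denotes topological closure and $A\left<T_1,\dots,T_m\right>$ is the restricted power series ring completed with respect to the induced topology. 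In the standard binary case this is just $A\left<f/g\right>=A\left<T\right>\slash\overline{(gT-f)}$, matching exactly the presentation $B\left<f/g\right>=B\left<u\right>/(gu-f)$ already used in \cref{lemma2.5}.

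Second, I would observe that $A\left<T_1,\dots,T_m\right>$ is by construction topologically of finite type over $A$, and that $B$ is obtained from it by passing to the quotient by the closed ideal $\overline{(gT_i-f_i)}$ equipped with the residue topology. A quotient of a topologically finite type $A$-algebra by a closed ideal is again topologically of finite type; concretely this exhibits $B$ precisely in the desired form $A\left<X_1,\dots,X_m\right>_{T_1,\dots,T_m}/I$ with $I=\overline{(gT_i-f_i)}$ closed, so that $B$ is in particular complete and separated. This is exactly what the statement asserts, and it dovetails with \cref{proposition2.5}, where the surjection $A\left<T_1,\dots,T_d\right>\rightarrow B$ witnessing the affinoid property of a rational localization is nothing but this presentation.

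The only point requiring genuine care — and the step I would flag as the main obstacle — is that we are working in the general analytic, non-Tate, non-noetherian setting, so one must check that the restricted power series ring $A\left<T_1,\dots,T_m\right>$ is well-defined and genuinely topologically of finite type in this generality, and that the ideal generated by the relations $gT_i-f_i$ is closed (equivalently, that $B$ is complete for the residue topology, rather than merely separated). Here the completeness and open mapping hypotheses recorded in the governing Setting are exactly what guarantee good behaviour of the relevant completion functor; with these in hand the closure of the relation ideal and the completeness of $B$ are automatic, so the presentation above is legitimate and the argument is complete. I would therefore expect the proof itself to be essentially formal once the definitional conventions are fixed, with the substantive content residing entirely in the care needed to justify these completion and closedness properties outside the noetherian or Tate frameworks.
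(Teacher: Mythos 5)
Your proposal is correct and is essentially the argument the paper intends: the paper's proof is simply ``Straightforward,'' and the natural reading is exactly your unwinding of the presentation $B=A\left<T_1,\dots,T_m\right>/\overline{(gT_1-f_1,\dots,gT_m-f_m)}$, which exhibits $B$ as a quotient of a restricted power series ring and hence as topologically of finite type. Your additional care about closedness of the relation ideal in the non-Tate, non-noetherian setting is a reasonable refinement but is already built into the standard definition of rational localization (one quotients by the closure), so no gap remains.
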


\begin{proof}
Straightforward.	
\end{proof}

\begin{lemma}
Let $f:(A,A^+)\rightarrow (B,B^+)$ be any rational localization map. Then we have that $B$ is affinoid over $A$ in the sense \cref{definition2.3}.
\end{lemma}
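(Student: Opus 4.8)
The plan is to realize $B$ explicitly as a quotient of a Tate algebra over $A$ and then to verify, one by one, the surjectivity, stable-pseudocoherence, and sheafiness required by \cref{definition2.3}. Write the rational localization, up to the standard presentation, as
\begin{align}
B=A\left<\frac{f_1}{g},...,\frac{f_n}{g}\right>=A\left<T_1,...,T_n\right>\slash \overline{(gT_1-f_1,...,gT_n-f_n)},
\end{align}
where $f_1,...,f_n,g$ generate the unit ideal of $A$. Here I use the preceding lemma, which tells us $B$ is of topologically finite type, so that such a finite presentation with $T_i\mapsto f_i/g$ exists; in particular the structure map $s:A\left<T_1,...,T_n\right>\rightarrow B$ is exactly the surjective covering demanded with $d=n$, and the first condition of \cref{definition2.3} holds immediately.

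Next I would establish that $B$ is stably-pseudocoherent over $C:=A\left<T_1,...,T_n\right>$. The natural tool is the Koszul complex $\mathrm{Kos}_C(gT_1-f_1,...,gT_n-f_n)$ on the generators of the kernel $I$: each term is finite free over $C$ and the differentials are strict, so once one knows that this complex resolves $B$, the pseudocoherence of $B$ over $C$ follows. Since $C$ is strongly sheafy (adjoining free variables preserves strong sheafiness) its rational localizations are flat in the sense of \cite{KL1}, the formation of the Koszul complex commutes with completed base change, and hence the resolution persists after any further rational localization of $C$; this is what upgrades pseudocoherence to stable-pseudocoherence over $C$.

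Finally, the sheafiness of $B$ is inherited from that of $A$: rational localizations of strongly sheafy Huber rings, respectively adic Banach rings, are again strongly sheafy by \cite{KL1}. Combining the three verifications shows that $f$ is affinoid in the sense of \cref{definition2.3}.

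The main obstacle is the stable-pseudocoherence step, and precisely the claim that $\mathrm{Kos}_C(gT_1-f_1,...,gT_n-f_n)$ is a resolution of $B$, i.e. that the sequence $gT_1-f_1,...,gT_n-f_n$ is Koszul-regular in $C$ and remains so after arbitrary rational localization. In the noetherian setting this is standard, but in the general non-noetherian adic and Banach context one cannot argue via dimension or depth; instead one must extract the exactness, and its stability, from the flatness and the local-on-the-base nature of rational localizations recorded in \cite{KL1}. Once this exactness is in hand, the remaining verifications are purely formal.
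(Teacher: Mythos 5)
Your argument is correct in outline but takes a genuinely different --- and far more substantive --- route than the paper. The paper's proof of this lemma is a bare cross-reference to \cref{proposition2.5} (``any naive \'etale morphism is affinoid''), whose own proof in turn simply asserts that the rational localizations occurring in the factorization are affinoid; so the dependency is circular as written, and the content you supply (the standard presentation $B=A\left<T_1,\dots,T_n\right>\slash\overline{(gT_1-f_1,\dots,gT_n-f_n)}$, the resulting surjection from the Tate algebra, the stable-pseudocoherence of $B$ over $C=A\left<T_1,\dots,T_n\right>$, and the sheafiness of the target) is exactly what the paper leaves implicit. What your approach buys is an actual verification of the three clauses of \cref{definition2.3}; what the paper's approach buys is only a deferral of the same claim.

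The step you rightly single out as the crux --- Koszul-regularity of $gT_1-f_1,\dots,gT_n-f_n$ in $C$ and its persistence under further rational localization of $C$ --- is indeed not free outside the noetherian setting, and you should not leave it as an aspiration. Two standard ways to close it: either quote the analysis of the kernel of $C\rightarrow B$ and the pseudocoherence of rational localizations directly from \cite{KL1} (where the relevant exactness and its stability under rational localization are established in the stably uniform setting), or sidestep the multi-variable Koszul complex entirely by factoring the rational localization into a composition of simple Laurent and simple balanced localizations, for each of which the kernel is principal and generated by a non-zero-divisor so that the free resolution has length one, and then invoke the paper's own lemma that compositions of affinoid morphisms are affinoid. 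Either patch makes your proof complete; as it stands the key exactness is identified but asserted rather than proved.
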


\begin{proof}
See the proof of \cref{proposition2.5}.	
\end{proof}

\begin{lemma}
Let $f:(A,A^+)\rightarrow (B,B^+)$ be any rational localization map. Then we have
\begin{align}
\tau_{\leq 1}\mathbb{L}_{B/A,\mathrm{topo}}	
\end{align}
is quasi-isomorphic to zero.
\end{lemma}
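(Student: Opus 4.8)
The plan is to show that for a rational localization $f\colon(A,A^+)\to(B,B^+)$ the truncated topological cotangent complex $\tau_{\leq 1}\mathbb{L}_{B/A,\mathrm{topo}}$, represented by the two-term complex $[I/I^2\to\Omega^1_{B/A,\mathrm{topo}}]$, is acyclic. The conceptual point is that a rational localization is a \emph{topological localization}: it is a flat epimorphism in the appropriate completed sense, and such maps should behave exactly like Zariski localizations in scheme theory, for which the cotangent complex vanishes. So the strategy is to reduce the topological statement to this localization property rather than to attempt a direct computation with the presentation $B=A\langle X_1,\dots,X_n\rangle_{T_1,\dots,T_n}/I$.

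First I would establish the key algebraic input: for a rational localization the multiplication map $B\widehat{\otimes}_A B\to B$ is an isomorphism (this is the standard idempotence/epimorphism property of rational localizations, and it is exactly the completed analog of $B\otimes_A B\cong B$ for ordinary localizations). From this it follows that $\Omega^1_{B/A,\mathrm{topo}}=0$, since the module of completed differentials is the kernel of the multiplication map modulo its square, and for an epimorphism this vanishes. Concretely, every element of $B$ is a limit of $A$-algebra combinations of the generators $f/g$ subject to the relations $gu=f$, and differentiating the defining relation $g\,du=0$ together with invertibility of $g$ in $B$ forces $du=0$; hence all differentials of the localization generators die, giving $\Omega^1_{B/A,\mathrm{topo}}=0$.

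Next I would handle the $I/I^2$ term. Once $\Omega^1_{B/A,\mathrm{topo}}=0$, acyclicity of $[I/I^2\to\Omega^1_{B/A,\mathrm{topo}}]$ is equivalent to the vanishing of $I/I^2$, i.e. to the differential map $I/I^2\to\Omega^1$ being an isomorphism onto $\Omega^1$, which collapses to showing $I/I^2=0$. The clean way is to invoke the transitivity (Jacobi--Zariski) sequence for the completed cotangent complex associated to $A\to A\langle X\rangle\to B$, together with the fact that $B$ is affinoid over $A$ (so $I$ is stably-pseudocoherent over $A\langle X\rangle$ by the discussion following \cref{proposition2.5}), which guarantees that the relevant modules are well-behaved and that base change along the localization is exact. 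Since $B\widehat{\otimes}_A B\cong B$ identifies the conormal module with its own localization, and $B$ is already $B\widehat{\otimes}_A B$-local, the conormal term $I/I^2$ must be zero.

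The main obstacle I expect is purely topological rather than algebraic: in this non-noetherian, merely open-mapping setting one must be careful that completed tensor products remain exact on the relevant subcategory and that $\Omega^1_{B/A,\mathrm{topo}}$ as defined by the quotient $\Omega^1/(I\cup d(I))\Omega^1$ genuinely computes the completed differentials (rather than some non-Hausdorff or incomplete variant). Establishing that the multiplication map is a strict isomorphism of Banach/Huber modules, and that passing to the completion does not introduce phantom classes in $I/I^2$, is the delicate step; once strictness and exactness of $\widehat{\otimes}_A$ are in hand over the affinoid presentation, the vanishing follows formally. I would therefore spend the bulk of the argument verifying these functional-analytic exactness properties and then deduce $\tau_{\leq 1}\mathbb{L}_{B/A,\mathrm{topo}}\simeq 0$ as a formal consequence of the epimorphism property of rational localizations.
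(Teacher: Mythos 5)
There is a genuine error in the second half of your argument. The first step is fine: a rational localization is an epimorphism in the completed sense, the multiplication $B\widehat{\otimes}_A B\to B$ is an isomorphism, and this does give $\Omega^1_{B/A,\mathrm{topo}}=0$. But the conclusion you draw from it --- that acyclicity of the truncated cotangent complex ``collapses to showing $I/I^2=0$'' --- rests on a misreading of what the two-term complex is. For a presentation $B=P/I$ with $P=A\langle X_1,\dots,X_n\rangle$, the target of the differential in the naive cotangent complex is $\Omega^1_{P/A}\widehat{\otimes}_P B$, the free (nonzero) module of differentials of the Tate algebra restricted to $B$, not $\Omega^1_{B/A,\mathrm{topo}}$; the latter is the \emph{cokernel} of that differential, i.e.\ $H^0$ of the complex. (The paper's displayed notation is admittedly loose here, but its own proof of the lemma uses the correct target $A[T]/(T-f)\,dT$.) Vanishing of $\Omega^1_{B/A}$ only says the differential is surjective; acyclicity requires in addition that it be injective, and neither term is zero. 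Concretely, for $B=A\langle T\rangle/(gT-f)$ the conormal module $I/I^2$ is free of rank one on the class of $gT-f$, and the differential sends it to $g\,dT\in B\,dT$, an isomorphism precisely because $g$ is a unit in $B$. Your closing step, deducing $I/I^2=0$ from ``$B$ is already $B\widehat{\otimes}_A B$-local,'' conflates the conormal module of the presentation ideal $I\subset P$ with the conormal module of the diagonal ideal $\ker(B\widehat{\otimes}_A B\to B)$; the epimorphism property controls only the latter, whose vanishing is exactly the statement $\Omega^1_{B/A}=0$ you already have, and says nothing about $H^{-1}$.

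For comparison, the paper reduces to a simple Laurent-type localization $A\to A\langle T\rangle/(T-f)$ and computes directly that $(T-f)/(T-f)^2\to A[T]/(T-f)\,dT$ is surjective with kernel exactly $(T-f)^2$, so the two-term complex is an isomorphism of modules placed in two degrees, and then completes topologically. If you want to salvage your epimorphism-based strategy, the correct statement to invoke is that the \emph{full} cotangent complex of a flat epimorphism vanishes, hence so does its truncation $\tau_{\leq 1}$ --- not that the individual terms of the naive complex vanish; and you would then still owe the verification that this vanishing survives the passage to completed differentials, which is the functional-analytic point you correctly flag at the end but which your present argument never actually discharges.
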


\begin{proof}
It suffices to reduce to standard binary localization such as simple Laurent or balanced localization, where we give a proof in the case of simple Laurent one:
\begin{align}
A\rightarrow A\{T\}/(T-f)	
\end{align}
for some $f\in A$. Then we have that actually the corresponding topological cotangent complex will be the corresponding completion of the corresponding algebraic ones (see \cite[Proposition 1.6.3]{Hu1}).	The corresponding quasi-isomorphism could be defined directly. One just considers the following algebraic differential map:
\begin{align}
I=(T-f) \rightarrow A[T]/(T-f) dT\\	
\end{align}
which is actually surjective since for any:
\begin{align}
\sum_{i\geq 0}a_iT^i+g(T)(T-f) dT \in A[T]/(T-f) dT	
\end{align}
one takes the corresponding integration of:
\begin{align}
\int_{f}^T	\sum_{i\geq 0}a_iT^i+g(T)(T-f) dT &= \int_{f}^T	\sum_{i\geq 0}a_iT^i+(\sum_{i\geq 0} g_iT^i) (T-f) dT\\
&= \int_{f}^T	\sum_{i\geq 0}a_iT^i+\int_{f}^T (\sum_{i\geq 0} g_iT^i)(T-f) dT \\
&= \int_{f}^T	\sum_{i\geq 0}a_iT^i+ \int_f^T \sum_{i\geq 0} g_iT^{i+1}- f\int_f^T \sum_{i\geq 0} g_iT^{i}\\
&=\sum_{i\geq 0}a_i\frac{1}{i+1}T^{i+1}|_{f}^T+\sum_{i\geq 0}g_i\frac{1}{i+2}T^{i+2}|_{f}^T-f \sum_{i\geq 0}g_i\frac{1}{i+1}T^{i+1}|_{f}^T  \\
&= (*)(T-f),
\end{align}
where we only have finite sums here since we are considering the corresponding topologized polynomial. For this algebraic map, we have that kernel is $(T-f)^2$, for instance consider:
\begin{align}
d((T-f)h(T))=0,	
\end{align}
we will have:
\begin{align}
h(T)+(T-f)h'(T)dT=0,	
\end{align}
which implies that the image of $(T-f)h(T)$ lives in the corresponding quotient $A[T]/(T-f)dT$. Therefore we have that the topologized (not complete yet) cotangent complex:
\[
\xymatrix@C+0pc@R+0pc{
[(T-f)/(T-f)^2\ar[r] \ar[r] \ar[r] &A[T]/(T-f)dT],
}
\]
which is quasi-isomorphic to zero. Then we take the corresponding completion with respect to the corresponding topology induced from $A$ we have the desired result.

\end{proof}

\begin{lemma}
Let $f:(A,A^+)\rightarrow (B,B^+)$ be any finite \'etale map. Then we have that $B$ is of topologically finite type.
\end{lemma}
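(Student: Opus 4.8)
The statement to prove is that for any finite étale map $f:(A,A^+)\rightarrow (B,B^+)$, the target $B$ is of topologically finite type over $A$. The plan is to unwind the definition of a finite étale morphism in the adic setting and exhibit an explicit topologically finite type presentation.

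First I would recall that a finite étale morphism factors as a module-finite map with the étale property, so that $B$ is in particular finite as an $A$-module: there exist finitely many elements $b_1,\dots,b_m\in B$ generating $B$ as an $A$-module. This is the crucial structural input, and it is exactly where the word \emph{finite} in ``finite étale'' is used. From such a generating set I would build a surjection of $A$-algebras by sending the restricted power series variables to the module generators. Concretely, I would consider the map
\begin{align}
\phi:A\left<X_1,\dots,X_m\right>_{T_1,\dots,T_m}\rightarrow B,\qquad X_i\mapsto b_i,
\end{align}
and argue that $\phi$ is surjective. Surjectivity onto the image as a \emph{ring} is clear since the $b_i$ already generate $B$ as an $A$-module (hence a fortiori as an $A$-algebra); the only genuine point is that the map is continuous and that the restricted power series converge, i.e.\ that the $b_i$ have the correct integrality/power-boundedness so that the universal property of $A\left<X_1,\dots,X_m\right>$ applies.

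The main obstacle I expect is the topological bookkeeping rather than the algebra: one must check that $B$ carries the quotient topology induced by $\phi$, so that the presentation genuinely witnesses topological finite type and not merely abstract finite generation. In the module-finite setting this follows because a finite étale extension of an analytic (open mapping, complete) Huber or adic Banach ring is itself complete for the natural topology and the module-finite structure forces the topology to agree with the submultiplicative/quotient one; here I would invoke the open mapping assumption from the relevant \textbf{Setting}, together with completeness of the Huber pair, to conclude that the surjection $\phi$ is automatically a topological quotient map. One should note that the parameters $T_i$ recording the ring of integral elements can be chosen from the $b_i$ themselves, since power-bounded module generators suffice.

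Finally, having produced the surjection $\phi$ with $B$ a topological quotient of $A\left<X_1,\dots,X_m\right>_{T_1,\dots,T_m}$, the defining condition of topological finite type is met by construction, which completes the argument. I would remark that this lemma dovetails with the earlier treatment of rational localizations: combining the present finite-étale case with the rational-localization case, every naive étale morphism is of topologically finite type, which is precisely the input needed to set up the presentation $B=A\left<X_1,\dots,X_n\right>_{T_1,\dots,T_n}/I$ used in the construction of $\Omega^1_{B/A,\mathrm{topo}}$ and the topological cotangent complex.
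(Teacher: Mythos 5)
Your argument is correct in substance but takes a genuinely different, more self-contained route than the paper. The paper's proof is a one-line citation: finite \'etale maps are naive \'etale, hence affinoid by \cref{proposition2.5}, and affinoidness in the sense of \cref{definition2.3} already packages a surjection $A\left<T_1,\dots,T_d\right>\rightarrow B$, so topological finite type is immediate by definition. You instead unwind what ``finite'' gives you directly --- module-finiteness of $B$ over $A$ --- and build the surjection $\phi:A\left<X_1,\dots,X_m\right>\rightarrow B$ explicitly from power-bounded module generators, using completeness and the open mapping hypothesis to see that $\phi$ is a topological quotient map. This is essentially the argument hiding inside the paper's proof of \cref{proposition2.5} (where it is asserted, not spelled out, that finite \'etale maps from strongly sheafy rings are affinoid), so your version buys transparency and independence from the affinoidness machinery, at the cost of having to address one point you only gesture at: the existence of \emph{power-bounded} module generators. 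In the Tate case this is a trivial rescaling by a pseudo-uniformizer; in the general analytic (non-Tate) setting of the paper's Setting 2.1 it still works, since $B^{\circ}$ is open and the topologically nilpotent elements of $A$ generate the unit ideal, so one can replace each generator $b$ by the finite family $t_i^{n}b$ for $n$ large, but this step deserves to be written out rather than asserted. Modulo that, your proof is fine and arguably more informative than the paper's citation.
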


\begin{proof}
Since we have that $B$ is affinoid over $A$ by \cref{proposition2.5}.	
\end{proof}

\begin{lemma}
Let $f:(A,A^+)\rightarrow (B,B^+)$ be any finite \'etale map. Then we have that $B$ is affinoid over $A$ in the sense \cref{definition2.3}.
\end{lemma}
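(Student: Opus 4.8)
The plan is to unwind the definition of affinoid morphism (\cref{definition2.3}) into its three requirements — a strict surjective covering by a completed polynomial (Tate) algebra, stable pseudocoherence of $B$ over that algebra, and (strong) sheafiness of $(B,B^+)$ — and to verify each of these for a finite \'etale map. Throughout I would use the basic structural fact that a finite \'etale morphism $A\rightarrow B$ in the sense of \cite{KL1} presents $B$ as a finite projective $A$-module. In particular $B$ is module-finite over $A$, hence topologically of finite type, which incidentally re-proves the preceding lemma without the circular appeal to \cref{proposition2.5}.

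First I would produce the covering. Choosing a finite set of $A$-module generators $b_1,\ldots,b_n$ of $B$, define $\phi:A\left<T_1,\ldots,T_n\right>\rightarrow B$ by $T_i\mapsto b_i$. The image contains the $A$-span of the $b_i$, which is all of $B$, so $\phi$ is surjective; by the open mapping hypothesis in the Setting this surjection is automatically strict, giving the required surjective covering with $d=n$.

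The heart of the argument is stable pseudocoherence of $B$ over $A\left<T_1,\ldots,T_n\right>$. The subtle point is that one needs pseudocoherence over $A\left<T\right>$, not merely over $A$ (where it is immediate from finite projectivity). I would factor $\phi$ as
\[
A\left<T_1,\ldots,T_n\right>\longrightarrow B\left<T_1,\ldots,T_n\right>\xrightarrow{\ \mathrm{ev}\ }B,
\]
where the first arrow is base change along $A\rightarrow B$ and $\mathrm{ev}$ is evaluation $T_i\mapsto b_i$. Over $B\left<T\right>$ the evaluation map identifies $B$ with the quotient by the topologically regular sequence $T_1-b_1,\ldots,T_n-b_n$, so the associated Koszul complex is a finite free resolution exhibiting $B$ as a \emph{perfect} $B\left<T\right>$-module (note that $\mathrm{ev}$ is strict, admitting the constant section $B\rightarrow B\left<T\right>$). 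Meanwhile $B\left<T\right>=B\otimes_A A\left<T\right>$ is finite projective, hence perfect, over $A\left<T\right>$ because $B$ is finite projective over $A$. Composing these two perfect-module relations along $A\left<T\right>\rightarrow B\left<T\right>\rightarrow B$ shows $B$ is perfect, and in particular stably pseudocoherent, over $A\left<T\right>$; stability under rational localization follows because every ingredient (finite projectivity, the Koszul resolution) base changes.

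Finally, for sheafiness I would invoke the theorem of Kedlaya--Liu \cite{KL1} that a finite \'etale algebra over a strongly sheafy ring is again strongly sheafy, which is precisely the input already used implicitly in the proof of \cref{proposition2.5}; this supplies the last clause of \cref{definition2.3}. The main obstacle is the pseudocoherence step: one must ensure that the Koszul complex on $T_1-b_1,\ldots,T_n-b_n$ is strictly exact in the completed adic (resp.\ Banach) setting and that pseudocoherence genuinely composes along $A\left<T\right>\rightarrow B\left<T\right>\rightarrow B$, both of which rest on the finite-projectivity bookkeeping of \cite{KL1} rather than on naive noetherian arguments; the sheafiness transfer, though deep, may be cited outright.
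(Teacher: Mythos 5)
Your proposal is correct, but it takes a substantially more explicit route than the paper, which disposes of this lemma by pointing back to the proof of \cref{proposition2.5}; there the entire argument for the finite \'etale case is the one-line assertion that a finite \'etale map out of a strongly sheafy ring has strongly sheafy target and is therefore affinoid, with the surjection from $A\left<T_1,\ldots,T_n\right>$ and the stable pseudocoherence left implicit. What you add, and what the paper never writes down, is the actual verification of the two nontrivial clauses of \cref{definition2.3}: the surjection $T_i\mapsto b_i$ onto a finite set of module generators (using finite projectivity of $B$ over $A$), and above all the observation that pseudocoherence is required over $A\left<T_1,\ldots,T_n\right>$ rather than over $A$, which you bridge by the Koszul resolution of $B$ over $B\left<T_1,\ldots,T_n\right>$ on the regular sequence $T_1-b_1,\ldots,T_n-b_n$ composed with finite projectivity of $B\left<T\right>$ over $A\left<T\right>$. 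This is the honest content of the lemma and the argument one would want on record; the paper's version buys brevity at the cost of burying exactly this step. Your side remark that module-finiteness directly gives topological finite type, avoiding the slightly circular citation of \cref{proposition2.5} in the preceding lemma, is also apt. Two small points you should still attend to: the map $A\left<T_1,\ldots,T_n\right>\rightarrow B$ only exists if the chosen generators $b_i$ are power-bounded, so you should note that in the analytic setting one may always replace an arbitrary generating set by one contained in $B^{\circ}$ (since $A\cdot B^{\circ}$ is an open, hence full, $A$-submodule of $B$); and the strict exactness of the completed Koszul complex deserves the one-sentence justification that each successive quotient $B\left<T_{k+1},\ldots,T_n\right>$ is again finite projective over $A\left<T\right>$, so no derived completion issues arise.
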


\begin{proof}
See the proof of \cref{proposition2.5}.	
\end{proof}

\begin{lemma}
Let $f:(A,A^+)\rightarrow (B,B^+)$ be any finite \'etale map. Then we have 
\begin{align}
\tau_{\leq 1}\mathbb{L}_{B/A,\mathrm{topo}}	
\end{align}
is quasi-isomorphic to zero.	
\end{lemma}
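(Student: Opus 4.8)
The plan is to mirror the strategy of the preceding rational-localization lemma: reduce to a local standard model where the naive cotangent complex can be written down explicitly, verify acyclicity there, and then pass to the completion and glue. First I would exploit the fact that being quasi-isomorphic to zero is a local property on a rational covering, so that---just as in \cref{lemma2.5} and via \cite[Proposition 2.4.20]{KL1}---it suffices to treat a finite \'etale morphism in a standard \'etale local form. Concretely, after a suitable rational localization one may present
\[
B = A\langle X\rangle/(P(X)),
\]
with $P$ monic and its derivative $P'(X)$ invertible in $B$; this is the adic/Banach incarnation of the classical fact that an \'etale morphism is standard \'etale locally.

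In this local model the computation is immediate and parallels the simple Laurent case. Writing $I=(P(X))$, the module $I/I^2$ is free of rank one over $B$ (generated by the class of $P$, since $P$ is a nonzerodivisor), and the ambient free differential module $\Omega^1\otimes_{A\langle X\rangle}B$ is the rank-one free module $B\,dX$. The differential $d\colon I/I^2 \to B\,dX$ sends the class of $P$ to $dP = P'(X)\,dX$. Because $P'(X)$ is a unit in $B$, this map is an isomorphism, so the algebraic naive cotangent complex $[\,I/I^2 \to B\,dX\,]$ of $B^h=A[X]/(P(X))$ is acyclic, i.e. quasi-isomorphic to zero. Finally I would take the topological completion with respect to the topology induced from $A$, exactly as at the end of the rational-localization proof. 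Here the affinoid property from \cref{proposition2.5} together with the stable pseudocoherence of $I$ and of $I/I^2$ (see \cref{lemma4.8}) guarantees that the relevant modules are well behaved, so completion preserves the isomorphism $I/I^2 \xrightarrow{\sim} B\,dX$ and hence the acyclicity of $\tau_{\leq 1}\mathbb{L}_{B/A,\mathrm{topo}}$. Gluing the local acyclicities back over the rational cover yields the global statement.

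The hard part will be twofold. First, establishing the standard \'etale local presentation in the present non-noetherian adic/Banach setting---one must know that finite \'etale morphisms of strongly sheafy rings really do acquire such a monogenic form locally, which is where the structure theory of finite \'etale algebras over these rings is needed (alternatively one bypasses monogenicity and argues abstractly from the formal-\'etale lifting characterization, but that fits the paper's explicit style less well). Second, and more delicate, is controlling the \emph{exactness of the completion}: because we are not in the noetherian world, one must invoke the pseudocoherence and closedness facts flagged in the footnotes (cf. the discussion around \cite[Theorem 1.4.20]{Ked1}) to ensure that passing from the algebraic naive cotangent complex to the topological one does not destroy the quasi-isomorphism. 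The open mapping and completeness hypotheses of the ambient setting are precisely what make this control possible.
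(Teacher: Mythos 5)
Your route is genuinely different from the paper's, and it contains a gap at its very first step. You propose to reduce, after a rational localization, to a monogenic standard \'etale presentation $B=A\langle X\rangle/(P(X))$ with $P$ monic and $P'$ invertible in $B$. Already in the classical algebraic setting this presentation need not exist after localizing only on the base: a finite \'etale algebra need not be monogenic Zariski-locally on $\mathrm{Spec}\,A$ (e.g.\ $A^3$ over a ring with residue field $\mathbb{F}_2$ admits no presentation $A[x]/(P)$ with $P$ separable of degree $3$). The standard \'etale local structure theorem requires localizing on the \emph{source} at some $g\in B$, which (i) is a principal localization that is not obviously realized by a rational localization of the adic/Banach ring $B$, and (ii) destroys the finiteness of $B$ over $A$, which is exactly the hypothesis you would want to keep. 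You flag "establishing the standard \'etale local presentation" as the hard part, but it is not merely hard --- as stated it is false at the level of generality you invoke it, so the reduction does not get off the ground without substantial reworking (e.g.\ passing to a finite Zariski cover of the source and then justifying that this cover is admissible and that vanishing glues, none of which is addressed).

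The paper's proof avoids this entirely by exploiting finiteness rather than monogenicity. One takes \emph{any} finite presentation $B=A[T_1,\dots,T_n]/(f_1,\dots,f_p)$, for which the algebraic naive cotangent complex $[I/I^2\to\Omega^1_{B/A}]$ is acyclic by the usual theory of \'etale ring maps; the key observation is that since $B$ is finite as an $A$-module (this is the "finite" in finite \'etale), the relevant modules $I/I^2$ and $\Omega^1$ are finite $B$-modules, hence finite over $A$, so the passage from $A[T_1,\dots,T_n]$ to $A\{T_1,\dots,T_n\}$ and the subsequent completion change nothing: the topological differential module agrees with the algebraic one, and the vanishing is inherited for free. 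This sidesteps both of the difficulties you correctly identify (the local structure theory and the exactness of completion), whereas your argument must confront both head-on. If you want to salvage your approach, the abstract formal-\'etale lifting characterization you mention in passing is the more promising fallback than monogenicity; but the finiteness argument is shorter and is what the paper actually uses.
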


\begin{proof}
This is basically nontrivial due to the fact that we are discussing the corresponding topological cotangent complex. However, one takes the corresponding finite presentation $B=A[T_1,...,T_n]/(f_1,...,f_p)$ (note that in fact that we have that $B$ is finite over $A$) since we are considering a finite \'etale map, which will realize the corresponding desired algebraic cotangent complex:
\[
\xymatrix@C+0pc@R+0pc{
[I/I^2\ar[r] \ar[r] \ar[r] &\Omega^1_{B/A}].
}
\]
For the topological situation we have that $B=A\{T_1,...,T_n\}/(f_1,...,f_p)$ by taking the corresponding completion. Again note that this means that actually the corresponding $A$-algebra $B$ is still finite over $A$ (since that is the very assumption). Therefore we could have the chance to right $B$ as just $\oplus_{i} Ae_i$ this is basically inducing the same differential module $\bigoplus_j(\bigoplus_{i} Ae_i) dT_j$ in both the corresponding topological setting and the algebraic setting. Then one could get the corresponding desired topological cotangent complex which is quasi-isomorphic to zero.
\end{proof}

\indent Then we consider the corresponding local composition:

\begin{lemma}
Let $f:(A,A^+)\rightarrow (B,B^+)$ be any naive \'etale map. Then we have that $B$ is of topologically finite type.
\end{lemma}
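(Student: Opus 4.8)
The plan is to reduce to the two building blocks already treated, rational localizations and finite \'etale maps, and then to glue. Since $f$ is naive \'etale it is, locally on $\mathrm{Spa}(B,B^+)$, a finite composite $A=\Gamma_0\to\Gamma_1\to\cdots\to\Gamma_m=B$ in which each arrow is either a rational localization or a finite \'etale morphism. By the two preceding lemmas each such arrow exhibits its target as topologically of finite type over its source, so the first task is to check that topological finite type is stable under composition, and the second is to pass from the local factorizations to a global presentation of $B$.

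First I would record the stability under composition. Suppose $\Gamma_{k}=\Gamma_{k-1}\langle X_1,\dots,X_{r}\rangle_{T}/I_k$ and $\Gamma_{k-1}=\Gamma_{k-2}\langle Y_1,\dots,Y_{s}\rangle_{T}/I_{k-1}$. Lifting the generators $X_i$ to the restricted power series ring over $\Gamma_{k-2}$ and adjoining them to the $Y_j$ yields a presentation
\begin{align}
\Gamma_k=\Gamma_{k-2}\langle Y_1,\dots,Y_{s},X_1,\dots,X_{r}\rangle_{T}/J,
\end{align}
so $\Gamma_k$ is again topologically of finite type over $\Gamma_{k-2}$; iterating along the chain shows that each local factorization presents $B$ locally as topologically of finite type over $A$. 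Here one only uses that the restricted power series construction is compatible with the relevant completion, exactly as in the algebraic statement that finite type is closed under composition.

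The genuine content is the globalization, and this is where I would lean on \cref{lemma2.5}. Covering $\mathrm{Spa}(B,B^+)$ by the members of a finite rational localization cover, the previous step gives surjections $A\langle T_1,\dots,T_{n}\rangle\to B\langle f/g\rangle$ and $A\langle T_1,\dots,T_{n}\rangle\to B\langle g/f\rangle$ over each standard binary piece. \cref{lemma2.5} then produces a single surjection $A\langle T_1,\dots,T_{n'}\rangle\to B$, and iterating over the cover, using \cite[Proposition 2.4.20]{KL1} to organize the gluing exactly as in the proof of \cref{proposition2.5}, yields a global surjection exhibiting $B$ as topologically of finite type over $A$.

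Alternatively, and more directly, the conclusion is already contained in \cref{proposition2.5}: a naive \'etale morphism is affinoid, and being affinoid in the sense of \cref{definition2.3} includes the existence of a surjective covering $A\langle T_1,\dots,T_d\rangle\to B$, which is precisely the assertion that $B$ is topologically of finite type over $A$. The main obstacle in the hands-on route is therefore not the individual factors but the descent of the local presentations to a single global one, which is precisely the role played by \cref{lemma2.5}.
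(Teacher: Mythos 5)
Your proposal is correct, and your closing ``alternative'' is exactly the paper's proof: it simply invokes \cref{proposition2.5} (naive \'etale implies affinoid), and affinoidness in the sense of \cref{definition2.3} already supplies the surjection $A\left<T_1,\dots,T_d\right>\rightarrow B$, i.e.\ topological finite type. The longer hands-on route you sketch first is essentially an unwinding of the proof of \cref{proposition2.5} itself (local factorization plus gluing via \cref{lemma2.5}), so it is not a genuinely different argument.
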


\begin{proof}
Since we have that $B$ is affinoid over $A$ by \cref{proposition2.5}.	
\end{proof}

\begin{lemma} \label{lemma4.8}
Let $f:(A,A^+)\rightarrow (B,B^+)$ be any naive \'etale morphism. Then we have 
\begin{align}
\tau_{\leq 1}\mathbb{L}_{B/A,\mathrm{topo}}	
\end{align}
is quasi-isomorphic to zero locally with respect to the corresponding rational localization.	
\end{lemma}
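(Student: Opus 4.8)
The plan is to reduce the statement to the two local building blocks already established in the preceding lemmas and then invoke the locality of the cotangent complex with respect to rational localizations. Recall that a naive \'etale morphism $f\colon(A,A^+)\rightarrow(B,B^+)$ admits, locally on a rational covering of the source (and target), a factorization into rational localizations and finite \'etale maps. Since the phrase \emph{locally with respect to the corresponding rational localization} in the statement exactly permits us to pass to such a covering, it suffices to treat a single composite $A\rightarrow A'\rightarrow B$ where $A\rightarrow A'$ is a rational localization and $A'\rightarrow B$ is finite \'etale. For each of these two factors we have already shown that the truncated topological cotangent complex $\tau_{\leq 1}\mathbb{L}_{\,\cdot\,/\,\cdot\,,\mathrm{topo}}$ is quasi-isomorphic to zero in the two immediately preceding lemmas.

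First I would record the transitivity (fiber-sequence) property of the topological cotangent complex for a composite $A\rightarrow A'\rightarrow B$, namely the distinguished triangle
\begin{align}
\mathbb{L}_{A'/A,\mathrm{topo}}\widehat{\otimes}_{A'}B\rightarrow \mathbb{L}_{B/A,\mathrm{topo}}\rightarrow \mathbb{L}_{B/A',\mathrm{topo}},
\end{align}
together with its truncation to $\tau_{\leq 1}$. This is the topological/Banach-completed analog of the classical transitivity triangle of Illusie \cite{III1},\cite{III2}, and it follows by completing the algebraic transitivity triangle for $A^h\rightarrow A'^h\rightarrow B^h$ along the induced topology, using that base change along $A'\rightarrow B$ of the vanishing complex $\mathbb{L}_{A'/A,\mathrm{topo}}$ stays contractible. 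The rational-localization factor contributes $\mathbb{L}_{A'/A,\mathrm{topo}}\simeq 0$ by the rational-localization lemma, so its base change to $B$ vanishes; the finite \'etale factor contributes $\mathbb{L}_{B/A',\mathrm{topo}}\simeq 0$ by the finite \'etale lemma. Hence both outer terms of the truncated triangle vanish, forcing $\tau_{\leq 1}\mathbb{L}_{B/A,\mathrm{topo}}\simeq 0$ for the single composite.

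Next I would globalize along the rational covering. Here the key input is that the topological naive cotangent complex commutes with rational localization in the sense that, for a rational localization $A\rightarrow A\langle f/g\rangle$, one has a base-change equivalence $\mathbb{L}_{B/A,\mathrm{topo}}\widehat{\otimes}_A A\langle f/g\rangle\simeq \mathbb{L}_{B\langle f/g\rangle/A\langle f/g\rangle,\mathrm{topo}}$; this is the cotangent-complex shadow of the glueing used in \cref{lemma2.5} and rests on \cite[Proposition 2.4.20]{KL1}. Since the vanishing of $\tau_{\leq 1}\mathbb{L}_{B/A,\mathrm{topo}}$ can be checked on a rational cover by this compatibility, and since each member of the cover is handled by the single-composite case above, the statement follows locally as asserted. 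I would also note that the stable-pseudocoherence of $I$ and of $I/I^2$ recorded just before the lemma guarantees that the terms $I/I^2$ and $\Omega^1_{B/A,\mathrm{topo}}$ are genuine objects in the relevant category of complete modules, so that the quasi-isomorphism is taken in the correct homotopy category.

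The main obstacle I anticipate is establishing the transitivity triangle and the rational-localization base-change compatibility \emph{in the completed topological setting} rather than the algebraic one, since completion is generally not exact and need not preserve the formation of $\mathbb{L}$ on the nose. The delicate point is that taking the Banach (or adic) completion of the algebraic transitivity triangle requires control over the topologies on the mapping-cone terms and the flatness/completeness hypotheses that make the derived completion interact well with the tensor product $\widehat{\otimes}$; the open-mapping hypothesis and the uniformity built into the \cref{setting} are what one uses to keep the completions well-behaved, essentially as in the finite-sum approximation argument of \cref{lemma2.5}. Once these two structural compatibilities are in place, the vanishing is a formal consequence of the two building-block lemmas, and the only genuinely analytic work is confined to verifying that completion respects the relevant exact triangles.
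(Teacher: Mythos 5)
Your proposal is correct and follows essentially the same route as the paper: the paper's proof likewise passes to the local factorization into rational localizations and finite \'etale maps, invokes the two preceding vanishing lemmas, and concludes by the composition (transitivity) property of the cotangent complex (citing \cite[Tag 08PN]{SP}). Your additional care about the completed transitivity triangle and base change along rational localizations elaborates on details the paper leaves implicit, but does not change the argument.
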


\begin{proof}
Locally we have that any naive \'etale morphism takes the corresponding truncated cotangent complex to be trivialized, by the corresponding composition properties of the cotangent complex \cite[Tag 08PN]{SP}.   
\end{proof}

\indent In order to globalize the picture one has to work harder. First we have the following:

\begin{proposition}
Let $f:(A,A^+)\rightarrow (B,B^+)$ be any naive \'etale morphism. Then $f$ is affinoid, of topologically finite type.	
\end{proposition}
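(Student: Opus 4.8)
The plan is to obtain both assertions as immediate consequences of material already established, since this proposition is essentially a repackaging of \cref{proposition2.5} together with the finite-type lemma recorded just above it. First I would invoke \cref{proposition2.5} to conclude directly that $f$ is affinoid: by definition a naive \'etale morphism is locally a composite of rational localizations and finite \'etale maps, each of which was shown to be affinoid, and the gluing argument of \cref{lemma2.5} (using \cite[Proposition 2.4.20]{KL1}) promotes these local factorizations-through-surjections to a global surjection $A\left<T_1,\dots,T_d\right>\twoheadrightarrow B$ exhibiting stable pseudocoherence over the Tate algebra.

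For the topologically finite type claim, I would observe that affinoidness in the sense of \cref{definition2.3} already carries within it the existence of a surjective covering from $A\left<T_1,\dots,T_d\right>$ onto $B$. This surjection is precisely a topological presentation exhibiting $B$ as topologically of finite type over $A$, so the finite-type assertion is logically subsumed in the affinoid one and no separate argument is strictly required. Alternatively, one may argue it directly by the local-to-global principle, since both being affinoid and being topologically of finite type are properties checkable on a rational localization cover, and each rational localization and each finite \'etale map was already shown to produce a target of topologically finite type; gluing these local presentations then yields the global one.

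The one point deserving care, and the place I expect the only genuine friction, is ensuring that the surjection furnished by the affinoid structure is a \emph{topological} quotient map rather than a mere algebraic surjection, so that $B$ genuinely acquires the quotient topology from $A\left<T_1,\dots,T_d\right>$ and the completed differentials $\Omega^1_{B/A,\mathrm{topo}}$ computed earlier are the correct ones. This is exactly where the open mapping hypothesis built into our \emph{Setting} intervenes: since the map is a continuous surjection of complete adic (respectively Banach) rings, open mapping forces the quotient topology to coincide with the given topology on $B$. Hence the presentation $B=A\left<T_1,\dots,T_d\right>/I$ is honestly topological, and the two assertions hold simultaneously.
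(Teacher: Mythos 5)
Your proposal is correct and follows essentially the same route as the paper: affinoidness is quoted directly from \cref{proposition2.5}, and the topologically-finite-type claim is observed to be already contained in the surjection $A\left<T_1,\dots,T_d\right>\rightarrow B$ packaged into the definition of an affinoid morphism. The additional remark about the open mapping property guaranteeing that this surjection is a topological quotient is a reasonable precision that the paper leaves implicit, but it does not change the argument.
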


\begin{proof}
This is by \cref{proposition2.5} for the affinoidness, which implies the corresponding second property. 
\end{proof}

\begin{definition}
We now define localized \footnote{However this could actually be globalized easily.} intrinsic \'etale morphism to be a morphism $f:(A,A^+)\rightarrow (B,B^+)$ which is affinoid with strongly sheafy target, and locally (with respect to the rational localization) the corresponding truncated topological cotangent complex is quasi-isomorphic to zero.	
\end{definition}


\indent We now consider the corresponding intrinsic \'etale morphisms of the corresponding special adic spaces after \cite{HK}:

\begin{setting}
We now consider the three special adic spaces after \cite{HK}, they are the corresponding strongly sheafy adic spaces, the corresponding sousperfectoid adic spaces and the corresponding diamantine adic spaces. We will use the notations $T,S,D$ to denote them in general respectively.	
\end{setting}

\indent The corresponding categories of strongly sheafy adic spaces, sousperfectoid spaces and diamantine adic spaces are nice enough since at least we have well-defined notion of naive \'etale morphisms (which is certainly the correct one) and furthermore well-defined \'etale sites.

\begin{definition}
For strongly sheafy adic spaces, a morphism $T_1\rightarrow T_2$ is called localized intrinsic \'etale if locally on $T_1$ this is localized intrinsic \'etale, namely for any neighbourhood $U\subset T_1$ we have that the morphism $(\mathcal{O}_{T_2}(U'),\mathcal{O}^+_{T_2}(U'))\rightarrow (\mathcal{O}_{T_1}(U),\mathcal{O}^+_{T_1}(U))$ is localized intrinsic \'etale. 	
\end{definition}

\begin{definition}
For sousperfectoid adic spaces, a morphism $S_1\rightarrow S_2$ is called localized intrinsic \'etale if locally on $S_1$ this is localized intrinsic \'etale, namely for any neighbourhood $U\subset S_1$ we have that the morphism $(\mathcal{O}_{S_2}(U'),\mathcal{O}^+_{S_2}(U'))\rightarrow (\mathcal{O}_{S_1}(U),\mathcal{O}^+_{S_1}(U))$ is localized intrinsic \'etale. 	
\end{definition}

\begin{definition}
For diamantine adic spaces, a morphism $D_1\rightarrow D_2$ is called localized intrinsic \'etale if locally on $D_1$ this is localized intrinsic \'etale, namely for any neighbourhood $U\subset D_1$ we have that the morphism $(\mathcal{O}_{D_2}(U'),\mathcal{O}^+_{D_2}(U'))\rightarrow (\mathcal{O}_{D_1}(U),\mathcal{O}^+_{D_1}(U))$ is localized intrinsic \'etale. 	
\end{definition}

\

\section{Properties}

\indent We now study the corresponding properties of the corresponding localized intrinsic \'etale morphisms, following \cite{EGAIV4} and \cite{Hu1}. We now assume the corresponding analyticity of the adic rings.

\begin{conjecture}
Any localized intrinsic \'etale morphism of strongly sheafy adic spaces is locally a composition of rational localization and finite \'etale morphism.	
\end{conjecture}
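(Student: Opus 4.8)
The plan is to establish the converse to \cref{proposition2.5} and \cref{lemma4.8}, namely that the intrinsic conditions — affinoidness together with the local vanishing of $\tau_{\leq 1}\mathbb{L}_{B/A,\mathrm{topo}}$ — force the naive geometric factorization into rational localizations and finite \'etale maps. I would work locally with respect to rational localization and reduce to the affine situation of a single affinoid morphism $f\colon (A,A^+)\to (B,B^+)$ with strongly sheafy target. The first step is to unwind the vanishing hypothesis: since the truncated topological cotangent complex is represented by $[I/I^2\to \Omega^1_{B/A,\mathrm{topo}}]$, its quasi-isomorphism to zero yields both $\Omega^1_{B/A,\mathrm{topo}}=0$ (topological unramifiedness) and the injectivity and projectivity data encoding formal smoothness of relative dimension zero. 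The goal is then to upgrade ``formally \'etale plus affinoid'' to ``locally standard \'etale'', the latter being exactly a composite of a rational localization (inverting the relevant discriminant-type element) followed by a finite \'etale map.

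The core of the argument is a local structure theorem for \'etale morphisms in the strongly sheafy adic setting, mimicking the scheme-theoretic statement that an \'etale morphism is Zariski-locally standard \'etale (\cite[Tag 00UE]{SP}). Concretely, using the affinoid presentation $B=A\langle X_1,\dots,X_n\rangle/I$ together with $\Omega^1_{B/A,\mathrm{topo}}=0$, I would extract from a generating family of $I$ a square subsystem whose Jacobian is invertible in $B$: the vanishing of the differentials says precisely that the $dX_i$ become dependent modulo $I$, so after a rational localization inverting the appropriate Jacobian minor one expects $B$ to be presented as $A\langle X_1,\dots,X_m\rangle/(f_1,\dots,f_m)$ with invertible Jacobian. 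A Hensel-type approximation argument — in the spirit of the open-mapping modification used in \cref{lemma2.5} — should then let one solve the system and exhibit $B$ as finite \'etale over a rational localization of $A$, that is, as a naive \'etale morphism, after which the gluing of \cite[Proposition 2.4.20]{KL1} globalizes the factorization.

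The main obstacle is flatness together with the non-Noetherian structure theory. In the scheme-theoretic proof one invokes Noetherian approximation, Zariski's main theorem, and the equivalence between formal \'etaleness plus flatness and \'etaleness; none of these is available verbatim for general strongly sheafy, non-Tate, non-Noetherian Huber rings, where stable pseudocoherence is the only finiteness substitute and flatness must be controlled through the completed tensor product rather than the algebraic one. In particular, deducing flatness of $B$ over $A$ from the vanishing of the cotangent complex — which classically rests on the local criterion for flatness — is delicate here, and producing the finite \'etale factor requires a form of Hensel's lemma for the ideal of definition that interacts well both with pseudocoherence and with the open mapping hypothesis. I expect that controlling this flatness and finiteness step, and verifying that the resulting factorization descends compatibly along the rational localizations used to trivialize the cotangent complex, is where essentially all of the difficulty concentrates.
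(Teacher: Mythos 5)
There is a genuine gap here, and it is worth being precise about its nature: the statement you are trying to prove is stated in the paper as a \emph{conjecture}, and the paper itself offers no general proof. The only thing the paper establishes is the strongly noetherian case, and there the ``proof'' is simply the observation that the intrinsic definition collapses to Huber's algebraic one (affinoidness reduces to admitting a surjection from a Tate algebra, and the vanishing of the truncated topological cotangent complex reduces to Huber's \'etaleness), so the classical local structure theory applies. Your outline correctly identifies the intended strategy --- upgrade ``affinoid plus locally vanishing $\tau_{\leq 1}\mathbb{L}_{B/A,\mathrm{topo}}$'' to ``locally standard \'etale'' by extracting an invertible Jacobian presentation and running a Hensel-type argument --- but every step that would make this work in the general strongly sheafy, non-noetherian setting is left unexecuted, and you say so yourself in your final paragraph. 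Extracting a square presentation $A\langle X_1,\dots,X_m\rangle/(f_1,\dots,f_m)$ with invertible Jacobian from the vanishing of $[I/I^2\to\Omega^1_{B/A,\mathrm{topo}}]$ requires $I/I^2$ to be finitely generated and projective in a controlled way; with only stable pseudocoherence available this is not automatic. Deducing flatness (and hence finite \'etaleness of the residual factor) from formal \'etaleness classically uses the local criterion for flatness and Noetherian approximation, neither of which is available here. And the Hensel/approximation step must interact with the completed tensor product and the open mapping hypothesis in ways that are not spelled out. None of these is a routine verification; together they \emph{are} the conjecture.

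Two smaller points. First, your claim that vanishing of the truncated complex yields $\Omega^1_{B/A,\mathrm{topo}}=0$ describes the non-ramifi\'e direction, but the delicate content is the surjectivity and injectivity of $I/I^2\to\Omega^1_{B/A,\mathrm{topo}}$ jointly, in the completed topological setting where quotients by possibly non-closed ideals cause trouble (the paper flags exactly this issue in its footnote to the definition of affinoid morphisms). Second, the gluing step via \cite[Proposition 2.4.20]{KL1} and the argument of the lemma on binary rational localizations goes in the direction ``local surjections glue to a global surjection''; you would need a descent statement for the \emph{factorization} into rational localization followed by finite \'etale, which is a different and harder gluing problem. In short: your proposal is a reasonable research plan for attacking the conjecture, consistent with what the paper's authors appear to intend, but it is not a proof, and the paper does not contain one either beyond the strongly noetherian reduction to Huber.
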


\indent Here is the special situation.

\begin{proposition}
As in \cite{Hu1}, namely in the strongly noetherian situation we have the conjecture holds.	
\end{proposition}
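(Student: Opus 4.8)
The plan is to reduce the statement to Huber's structure theory for \'etale morphisms of adic spaces, which is available precisely in the strongly noetherian situation. First I would observe that under the strong noetherian hypothesis every analytic affinoid ring is automatically strongly sheafy, so the sheafiness conditions built into the definition of a localized intrinsic \'etale morphism become vacuous, and an affinoid morphism of topologically finite type is exactly a morphism locally of finite type in the sense of \cite[1.6]{Hu1}. In this regime the completed differential module $\Omega^1_{B/A,\mathrm{topo}}$ coincides with Huber's construction (see \cite[1.6.2--1.6.3]{Hu1}), the ideal $I$ and the quotient $I/I^2$ of \cref{lemma4.8} are coherent, and the truncated topological cotangent complex $\tau_{\leq 1}\mathbb{L}_{B/A,\mathrm{topo}}$ is represented by the two-term complex $[I/I^2 \to \Omega^1_{B/A,\mathrm{topo}}]$ with finitely presented terms.

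Next I would translate the vanishing condition into Huber's language. The vanishing of $H^0$ of the truncated complex gives $\Omega^1_{B/A,\mathrm{topo}} = 0$, which is precisely the condition that $f$ be non-ramifi\'e in Huber's sense; the vanishing of $H^{-1}$ together with the finite presentation forces $I/I^2 \to \Omega^1_{B/A,\mathrm{topo}}$ to be an isomorphism, and in the noetherian finite-type context this supplies the flatness (equivalently the relative dimension zero together with formal smoothness) needed to conclude that $f$ is \'etale in the sense of \cite[1.6]{Hu1}. Thus the class of localized intrinsic \'etale morphisms agrees, in the strongly noetherian situation, with Huber's class of \'etale morphisms between adic spaces.

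Finally I would invoke Huber's local structure theorem: in the noetherian setting an \'etale morphism of adic spaces is, rational-locally on source and target, a composition of a rational localization and a finite \'etale morphism (the adic analogue of the classical local description of \'etale morphisms, cf. \cite[Chapitre 17]{EGAIV4} and \cite[Tag 00U1]{SP}). Applying this locality statement along a rational cover produced in the previous step yields the desired factorization, which is exactly the assertion of the conjecture.

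I expect the main obstacle to be the second step: bridging the homotopy-theoretic vanishing of the full truncated cotangent complex and Huber's concrete definition of \'etaleness, which is phrased in terms of $\Omega^1_{B/A,\mathrm{topo}}$ together with a flatness requirement rather than in terms of a derived complex. Concretely, one must extract flatness of $B$ over $A$ from the vanishing of $H^{-1}(\tau_{\leq 1}\mathbb{L}_{B/A,\mathrm{topo}})$; while this is standard in the algebraic noetherian theory, one has to check that completion with respect to the adic topology preserves the equivalence, and that the affinoid presentation $B = A\left<X_1,\ldots,X_n\right>/I$ is compatible with the presentation used to compute Huber's differentials. Once this compatibility is established, the remaining steps are formal applications of \cite{Hu1}.
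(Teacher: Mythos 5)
Your proposal is correct and follows essentially the same route as the paper's (much terser) proof: identify the localized intrinsic \'etale morphisms with Huber's \'etale morphisms in the strongly noetherian setting --- where affinoidness collapses to admitting a surjection from a Tate algebra over the source and the cotangent-complex vanishing recovers Huber's $\Omega^1$-plus-flatness criterion --- and then invoke Huber's local structure theorem for \'etale morphisms of adic spaces. Your version usefully makes explicit the step the paper leaves implicit, namely extracting flatness from the vanishing of $H^{-1}$ of the truncated topological cotangent complex.
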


\begin{proof}
This is because in that setting our definition in the intrinsic setting coincides with the more algebraic one in \cite{Hu1}. And note that in this setting the affinoidness of the morphism reduces to just being admitting surjections from Tate algebra over the source.	
\end{proof}


\indent If this is true then we have:

\begin{corollary}
Any localized intrinsic \'etale morphism of sousperfectoid adic spaces is locally a composition of rational localization and finite \'etale morphism. Any localized intrinsic \'etale morphism of diamantine adic spaces is locally a composition of rational localization and finite \'etale morphism.	
\end{corollary}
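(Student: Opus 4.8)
The plan is to deduce both assertions from the preceding conjecture by exploiting the fact that sousperfectoid and diamantine adic spaces sit as full subcategories of strongly sheafy adic spaces, on which the notion of localized intrinsic \'etale morphism is \emph{literally} the same. First I would recall, following Hansen-Kedlaya \cite{HK} together with \cite{KL1} and \cite{KL2}, that every sousperfectoid adic space is strongly sheafy (sousperfectoid rings are stably uniform, hence sheafy, and in fact strongly sheafy), and likewise that every diamantine adic space is strongly sheafy. This furnishes fully faithful embeddings of the category of sousperfectoid adic spaces and of the category of diamantine adic spaces into the category of strongly sheafy adic spaces.

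Second, I would observe that the defining condition for a localized intrinsic \'etale morphism of sousperfectoid (respectively diamantine) adic spaces is, by construction, nothing but the requirement that locally on the source the associated map of Huber pairs be localized intrinsic \'etale in the sense already fixed for ring maps. Since that notion refers only to affinoidness with strongly sheafy target and to the local quasi-isomorphism to zero of the truncated topological cotangent complex $\tau_{\leq 1}\mathbb{L}_{B/A,\mathrm{topo}}$, it does not depend on which ambient category of adic spaces one places the morphism in. Consequently a localized intrinsic \'etale morphism of sousperfectoid (respectively diamantine) spaces is, in particular, a localized intrinsic \'etale morphism of strongly sheafy adic spaces.

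Third, granting the conjecture above, such a morphism of strongly sheafy spaces is locally a composition of rational localizations and finite \'etale morphisms. It then remains to check that this factorization stays internal to the sousperfectoid (respectively diamantine) world, which follows because both classes are stable under rational localization and under finite \'etale extension, again by \cite{HK}. I expect the main obstacle to lie precisely here: one must invoke the closure of the sousperfectoid and diamantine properties under these two elementary operations, which is the substantive input drawn from \cite{HK}, rather than in any fresh cotangent-complex computation. Assembling these three steps yields both statements of the corollary, and the argument is uniform in $S$ and $D$ since in each case the only structural facts used are strong sheafiness and the stability of the class under the two operations.
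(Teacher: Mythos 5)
Your proposal is correct and matches the paper's intent: the paper offers no explicit proof, merely prefacing the corollary with ``If this is true then we have,'' and your argument --- sousperfectoid and diamantine spaces are strongly sheafy, the localized intrinsic \'etale condition is defined identically on the underlying Huber pairs, so the conjecture for strongly sheafy spaces applies directly --- is exactly the implicit deduction intended. Your additional observation that the factorization remains internal to the sousperfectoid (resp.\ diamantine) class via stability under rational localization and finite \'etale extension is a useful refinement the paper leaves unstated.
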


\begin{proposition}
Compositions of localized intrinsic \'etale morphisms of strongly sheafy adic spaces are again localized intrinsic \'etale morphism.	
\end{proposition}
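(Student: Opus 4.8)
The plan is to reduce the statement about compositions of localized intrinsic \'etale morphisms of strongly sheafy adic spaces to the affine situation and then to the behavior of the truncated topological cotangent complex under composition. Since the property of being localized intrinsic \'etale is defined locally with respect to rational localization, it suffices to verify the claim for a composition of affinoid morphisms of strongly sheafy Huber (or adic Banach) rings $(A,A^+)\to (B,B^+)\to (C,C^+)$, where each of the two maps is affinoid with strongly sheafy target and has locally trivial $\tau_{\leq 1}\mathbb{L}_{-/-,\mathrm{topo}}$. I would first record that the composite is again affinoid with strongly sheafy target: affinoidness of the composite follows from the composition lemma for affinoid morphisms proved earlier in the excerpt, and strong sheafiness of $C$ is already part of the hypothesis on the second map. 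This disposes of the structural half of the definition.

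The analytic half is to show that $\tau_{\leq 1}\mathbb{L}_{C/A,\mathrm{topo}}$ is locally quasi-isomorphic to zero. First I would invoke the distinguished triangle for the topological cotangent complex associated to the composite $A\to B\to C$, namely
\[
\mathbb{L}_{B/A,\mathrm{topo}}\otimes_B C\longrightarrow \mathbb{L}_{C/A,\mathrm{topo}}\longrightarrow \mathbb{L}_{C/B,\mathrm{topo}},
\]
the topological analog of the transitivity triangle (\cite[Tag 08PN]{SP}) already used in \cref{lemma4.8}. After applying the truncation functor $\tau_{\leq 1}$, the hypotheses give that $\tau_{\leq 1}\mathbb{L}_{C/B,\mathrm{topo}}$ vanishes locally on the target and that $\tau_{\leq 1}\mathbb{L}_{B/A,\mathrm{topo}}$ vanishes locally on $\operatorname{Spa}(B,B^+)$; base changing the latter along $B\to C$ and using that the leftmost and rightmost terms of the triangle are locally acyclic, one concludes that the middle term $\tau_{\leq 1}\mathbb{L}_{C/A,\mathrm{topo}}$ is locally quasi-isomorphic to zero. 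Here I would use that the truncated cotangent complex commutes suitably with the base change $-\otimes_B C$ up to the relevant degree, which is where the finite projectivity of $\Omega^1$ and the stably-pseudocoherence of the $I/I^2$ terms (guaranteed by affinoidness together with \cref{lemma4.8}) make the derived tensor product well behaved.

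The main obstacle will be the compatibility of the local quasi-isomorphisms across the two maps, since ``locally trivial'' is taken with respect to two a priori different rational localization coverings, one on $\operatorname{Spa}(B,B^+)$ and one on $\operatorname{Spa}(C,C^+)$. I would handle this by passing to a common refinement: pulling back the rational cover of $\operatorname{Spa}(B,B^+)$ that trivializes $\tau_{\leq 1}\mathbb{L}_{B/A,\mathrm{topo}}$ along the continuous map $\operatorname{Spa}(C,C^+)\to\operatorname{Spa}(B,B^+)$ yields rational subsets of $\operatorname{Spa}(C,C^+)$, and intersecting these with the cover that trivializes $\tau_{\leq 1}\mathbb{L}_{C/B,\mathrm{topo}}$ produces a single rational localization cover of $\operatorname{Spa}(C,C^+)$ over which both outer terms of the transitivity triangle are acyclic. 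The delicate point, and the one requiring the most care, is that the topological cotangent complex is only defined up to completion and the base change and truncation operations must be checked to commute with this completion in the non-noetherian strongly sheafy setting; this is exactly the subtlety flagged in the remarks of the excerpt, and I would lean on the stably-pseudocoherence supplied by affinoidness, together with \cite[Proposition 2.4.20]{KL1} for the glueing, to control the completed tensor products and thereby close the argument.
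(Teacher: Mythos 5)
Your proposal is correct and follows essentially the same route as the paper's proof, which simply observes that each of the three defining properties (topologically finite type, affinoid, locally acyclic truncated topological cotangent complex) is stable under composition. You supply substantially more detail than the paper does — in particular the transitivity triangle for $\mathbb{L}_{-/-,\mathrm{topo}}$ and the common refinement of the two rational covers — but these are elaborations of the same argument rather than a different approach.
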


\begin{proof}
Locally it is the corresponding compositions of topologically finite type morphism, and locally it is the corresponding compositions of the corresponding affinoid morphisms, and locally it is the corresponding compositions of morphisms giving rise to the quasi-isomorphic to zero truncated cotangent complex.	
\end{proof}

\begin{corollary}
Compositions of localized intrinsic \'etale morphisms of sousperfectoid adic spaces are again localized intrinsic \'etale morphism. Compositions of localized intrinsic \'etale morphisms of diamantine adic spaces are again localized intrinsic \'etale morphism.		
\end{corollary}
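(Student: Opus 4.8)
The plan is to deduce both assertions directly from the preceding Proposition for strongly sheafy adic spaces, by observing that the categories of sousperfectoid and diamantine adic spaces sit inside the category of strongly sheafy adic spaces in a manner compatible with the definition of localized intrinsic \'etale morphisms. First I would recall that every sousperfectoid ring is strongly sheafy (after Hansen-Kedlaya \cite{HK} and Kedlaya-Liu \cite{KL1}, \cite{KL2}), and this property is preserved under rational localization and under finite \'etale extension; consequently each section ring $\mathcal{O}_{S}(U)$ appearing in the definition is strongly sheafy. Likewise, diamantine adic spaces are, locally on the source, modeled on strongly sheafy Huber pairs, so the local ring maps entering the definition of a localized intrinsic \'etale morphism are precisely maps of strongly sheafy rings in the sense already treated.

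Next I would note that the definition of a localized intrinsic \'etale morphism in the sousperfectoid (resp. diamantine) setting is, verbatim, the same local condition as in the strongly sheafy setting: for every neighbourhood $U$ of a point of $S_1$ (resp. $D_1$), the induced map on sections $(\mathcal{O}_{S_2}(U'),\mathcal{O}^+_{S_2}(U'))\rightarrow(\mathcal{O}_{S_1}(U),\mathcal{O}^+_{S_1}(U))$ is required to be affinoid with strongly sheafy target and to have $\tau_{\leq 1}\mathbb{L}_{\mathcal{O}_{S_1}(U)/\mathcal{O}_{S_2}(U'),\mathrm{topo}}$ locally quasi-isomorphic to zero. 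Since by the previous step these section rings are already strongly sheafy, each such local map is literally a localized intrinsic \'etale morphism of strongly sheafy Huber pairs, and the space-level notion for sousperfectoid and diamantine spaces coincides with the restriction of the strongly sheafy one.

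Finally, given two composable localized intrinsic \'etale morphisms $S_1\rightarrow S_2\rightarrow S_3$ (resp. of diamantine spaces), I would simply apply the Proposition for strongly sheafy adic spaces to the associated local maps of strongly sheafy section rings. Locally the composite is a composition of topologically finite type maps (hence topologically finite type), a composition of affinoid morphisms (hence affinoid, by the composition lemma for affinoid morphisms), and a composition of maps with vanishing truncated topological cotangent complex (hence again with vanishing $\tau_{\leq 1}\mathbb{L}_{\mathrm{topo}}$, via the transitivity triangle \cite[Tag 08PN]{SP} as in \cref{lemma4.8}). The main obstacle, such as it is, is purely bookkeeping: one must verify that the \emph{strongly sheafy target} requirement is stable under the composition at the level of section rings, which is immediate since the target ring is unchanged between the two factorizations and is strongly sheafy by hypothesis. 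Hence the composite satisfies the local condition and is again localized intrinsic \'etale, which establishes both statements.
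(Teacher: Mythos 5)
Your proposal is correct and follows essentially the same route the paper intends: the corollary is left without an explicit proof precisely because sousperfectoid and diamantine spaces are strongly sheafy, so the statement reduces immediately to the preceding Proposition for strongly sheafy adic spaces, whose three-part local verification (topologically finite type, affinoid, vanishing truncated topological cotangent complex) you reproduce faithfully. Your additional bookkeeping about the stability of the strongly sheafy target condition is a reasonable elaboration but does not change the argument.
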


\begin{proposition}
The localized intrinsic \'etaleness of any morphism $T_1\rightarrow T_2$ of strongly sheafy adic rings is preserved under the base change along any morphism of $T_3\rightarrow T_2$.
\end{proposition}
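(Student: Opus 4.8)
The plan is to unwind the three conditions constituting localized intrinsic \'etaleness and to verify that each is stable under completed base change. Writing the relevant affinoid charts as $T_2=\operatorname{Spa}(A,A^+)$, $T_1=\operatorname{Spa}(B,B^+)$ and $T_3=\operatorname{Spa}(A',A'^+)$, the base change $T_1\times_{T_2}T_3\to T_3$ is computed on rings by the completed tensor product $B':=B\,\widehat{\otimes}_A A'$, so I would first reduce the statement to the affine claim that $A'\to B'$ is again localized intrinsic \'etale. Since all three defining properties (affinoidness, strongly sheafy target, and local vanishing of $\tau_{\leq 1}\mathbb{L}_{-,\mathrm{topo}}$ with respect to rational localization) are themselves local on the source, I would argue locally on $T_1$ throughout. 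I note at the outset that even the existence of the fiber product as an object of our category is bound up with the sheafiness of $B'$, so I would treat it as part of the same package.

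First I would dispatch affinoidness. By \cref{definition2.3} there is a surjection $A\langle T_1,\dots,T_d\rangle\twoheadrightarrow B$ realizing $B$ as a stably-pseudocoherent module. Applying $-\,\widehat{\otimes}_A A'$, and using $A\langle T_1,\dots,T_d\rangle\,\widehat{\otimes}_A A'=A'\langle T_1,\dots,T_d\rangle$, one obtains a strict surjection $A'\langle T_1,\dots,T_d\rangle\twoheadrightarrow B'$, where strictness comes from the open mapping hypothesis exactly as in the approximation step of \cref{lemma2.5}. Stable pseudocoherence is preserved under base change in the sense of \cite{KL1}, so $B'$ remains stably-pseudocoherent over $A'\langle T_1,\dots,T_d\rangle$, giving affinoidness of $A'\to B'$.

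Next the cotangent complex. The essential input is that the truncated (naive) topological cotangent complex satisfies base change: the standard flat base change for K\"ahler differentials already yields $\Omega^1_{B'/A',\mathrm{topo}}\cong\Omega^1_{B/A,\mathrm{topo}}\,\widehat{\otimes}_B B'$ with no flatness hypothesis, and the two-term description $[I/I^2\to\Omega^1]$ together with \cite[Tag 08PN]{SP} upgrades this, after completion, to $\tau_{\leq 1}\mathbb{L}_{B'/A',\mathrm{topo}}\simeq\tau_{\leq 1}\mathbb{L}_{B/A,\mathrm{topo}}\,\widehat{\otimes}_B B'$. Since \cref{lemma4.8} furnishes local vanishing of $\tau_{\leq 1}\mathbb{L}_{B/A,\mathrm{topo}}$ with respect to rational localization, and since these rational localizations are themselves stable under base change, tensoring up produces the required local vanishing of $\tau_{\leq 1}\mathbb{L}_{B'/A',\mathrm{topo}}$.

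The hard part will be the strongly sheafy target, i.e. showing $B'=B\,\widehat{\otimes}_A A'$ is again strongly sheafy. In the non-noetherian world this is precisely the delicate point flagged after \cref{definition2.3}, since sheafiness is not preserved under arbitrary completed tensor products. Here I would exploit that all three of $T_1,T_2,T_3$ are strongly sheafy, together with the fact that the affinoid building blocks have sheafiness behavior controlled by the descent results of \cite{KL1}: concretely, I would reduce along a local factorization into rational localizations (whose base changes are again rational localizations, hence strongly sheafy) and finite \'etale maps (where the finite \'etale base change of a strongly sheafy ring is again strongly sheafy), and then glue. In full generality, however --- without the conjectural local factorization of localized intrinsic \'etale morphisms --- the preservation of strong sheafiness under the completed tensor product of strongly sheafy rings over a strongly sheafy base is the genuine obstruction, and I expect the cleanest unconditional argument to be available in the sousperfectoid and diamantine settings, where the additional structure renders sheafiness automatic and the base change computation goes through directly.
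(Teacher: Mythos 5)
Your strategy coincides with the paper's: the paper's own proof is a three-sentence assertion that each of the three defining conditions (topologically finite type, affinoidness, local vanishing of the truncated topological cotangent complex) is separately preserved under base change, and your proposal proves exactly those three stability statements. Where you differ is in substance rather than structure: you actually supply the mechanisms (the identification $A\left<T_1,\dots,T_d\right>\widehat{\otimes}_A A'\cong A'\left<T_1,\dots,T_d\right>$, strictness of the induced surjection via open mapping, and base change of stable pseudocoherence from \cite{KL1}), and --- more importantly --- you isolate the preservation of the \emph{strongly sheafy target} under completed tensor product as the genuine obstruction. The paper's proof is silent on this point even though its definition of localized intrinsic \'etale explicitly demands a strongly sheafy target, so your observation that an unconditional argument is only available after a local factorization into rational localizations and finite \'etale maps (or in the sousperfectoid/diamantine settings) is a real improvement in honesty over the printed argument. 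One small caveat on the cotangent complex step: the identification $\tau_{\leq 1}\mathbb{L}_{B'/A',\mathrm{topo}}\simeq\tau_{\leq 1}\mathbb{L}_{B/A,\mathrm{topo}}\widehat{\otimes}_B B'$ is not literally available without Tor-independence; what one does have unconditionally is the base change map $\mathrm{NL}_{B/A}\otimes_B B'\rightarrow \mathrm{NL}_{B'/A'}$, an isomorphism on $H^0$ and a surjection on $H^{-1}$, and since vanishing of the two-term complex means its differential is an isomorphism (a property preserved by any tensor product), this weaker statement already yields the vanishing you need --- modulo the completion issues that pervade the whole paper and which neither you nor the paper fully resolve.
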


\begin{proof}
The base change of any morphism of topologically finite type is again of topologically finite type. The affinoidness of morphism is also preserved under any base change morphism. Finally for the cotangent complex locally, we definitely have the corresponding result as well.	
\end{proof}

\begin{proposition}
The \'etale property of a morphism between strongly sheafy rings could be detected locally at each point.	
\end{proposition}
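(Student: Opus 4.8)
The plan is to unwind the definition of a localized intrinsic \'etale morphism $f:(A,A^+)\rightarrow(B,B^+)$ into its constituent conditions and to verify that each one is local with respect to a rational covering of $\mathrm{Spa}(B,B^+)$ (equivalently, that it may be checked after pulling back along the rational localizations induced by a covering). Recall that $f$ is localized intrinsic \'etale precisely when it is affinoid with strongly sheafy target and the truncated topological cotangent complex $\tau_{\leq 1}\mathbb{L}_{B/A,\mathrm{topo}}$ is quasi-isomorphic to zero locally with respect to the rational localization. Since the cotangent complex condition is phrased locally by definition, and since one direction (a globally localized intrinsic \'etale morphism restricts to one on each member of a covering) is immediate, the content lies entirely in the converse gluing.

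First I would treat affinoidness. The stable-pseudocoherence of $B$ over $A\left<T_1,\ldots,T_d\right>$ is a local condition by its very definition, so it descends from a rational covering; this is the same mechanism used in the proof of \cref{proposition2.5}. The existence of a single surjection $A\left<T_1,\ldots,T_{n'}\right>\rightarrow B$ assembled from local surjections is exactly the gluing supplied by \cref{lemma2.5} for binary rational localizations, which globalizes to an arbitrary rational covering by iterating over the standard rational subsets and invoking \cite[Proposition 2.4.20]{KL1}. Thus affinoidness, and hence topological finite type, is detected locally.

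Next I would address the target being strongly sheafy; this is a genuinely local condition on $\mathrm{Spa}(B,B^+)$, since strong sheafiness is by definition the statement that the structure presheaf, together with its pullbacks along all further rational localizations and finite \'etale covers, is a sheaf, which is checked on rational coverings. Finally, the vanishing of the truncated topological cotangent complex is already a local assertion in the definition, and its compatibility with rational localization follows from the localization and base-change behaviour of the cotangent complex as recorded in \cite[Tag 08PN]{SP}, exactly as exploited in \cref{lemma4.8}.

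The hard part will be the compatibility of the \emph{topological} cotangent complex with rational localization, i.e. ensuring that the formation of $\tau_{\leq 1}\mathbb{L}_{B/A,\mathrm{topo}}$ commutes with the completions introduced by rational localization. This is the recurring completion subtlety flagged throughout the paper: one must guarantee that completing with respect to the induced topology does not destroy the flatness and localization behaviour that makes the algebraic statement \cite[Tag 08PN]{SP} applicable, which in turn relies on the open mapping hypothesis on the rings and on the stable-pseudocoherence established above. Once this compatibility is secured, the three local conditions assemble and the proposition follows.
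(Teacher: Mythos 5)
Your proposal is correct in substance and is, in effect, a fully written-out version of what the paper leaves as a one-word proof: the paper simply says ``Straightforward,'' because every clause in the definition of a localized intrinsic \'etale morphism is already phrased locally with respect to rational localization. Your unwinding matches the intended reasoning: affinoidness glues via \cref{lemma2.5} together with \cite[Proposition 2.4.20]{KL1} exactly as in \cref{proposition2.5}, stable-pseudocoherence and strong sheafiness are local by definition, and the cotangent-complex condition is stated locally to begin with. The one point where you diverge is in flagging the compatibility of $\tau_{\leq 1}\mathbb{L}_{B/A,\mathrm{topo}}$ with the completions introduced by rational localization as ``the hard part'' and then leaving it unresolved; but note that this is a difficulty you have imposed on yourself rather than one the proposition requires you to solve, since the definition does not ask for a global vanishing statement that must then be localized --- it only ever asserts vanishing locally, so detecting the property at each point is essentially tautological. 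If you did want to upgrade the statement to say that a globally defined $\tau_{\leq 1}\mathbb{L}_{B/A,\mathrm{topo}}$ vanishes if and only if its rational localizations do, then the completion issue you raise would indeed be the genuine obstacle, and neither your sketch nor the paper addresses it.
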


\begin{proof}
Straightforward.	
\end{proof}

\indent We now consider some functoriality issue in our current situation. Now we consider the corresponding localized intrinsic \'etale morphisms under the construction of Witt vectors. Now let:
\begin{align}
A\rightarrow B	
\end{align}
be a general morphism in positive characteristic. Therefore we can take the corresponding Witt vector construction:
\begin{align}
W(A^\flat)\rightarrow W(B^\flat),	
\end{align}
where we assume that $A^\flat\rightarrow B^\flat$ is localized intrinsic \'etale. Here we take the completion if needed along the corresponding Fontainisation.

\begin{proposition}
The map 
\begin{align}
W(A^\flat)\rightarrow W(B^\flat),	
\end{align}
is affinoid if the kernel is closed \footnote[1]{This is again due to the very subtle point around the sheafiness such as in \cite[Theorem 1.4.20]{Ked1}.}.	
\end{proposition}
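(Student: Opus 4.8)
The plan is to transport the affinoid structure of $A^\flat \to B^\flat$ across the Witt vector functor, reserving the closed-kernel hypothesis for the single place where sheafiness is genuinely needed.

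First I would unwind the standing assumption that $A^\flat \to B^\flat$ is localized intrinsic \'etale: by the definition this map is in particular affinoid with strongly sheafy target, so there is a surjection
\begin{align}
s^\flat : A^\flat\left<T_1,\dots,T_d\right> \twoheadrightarrow B^\flat
\end{align}
realizing $B^\flat$ as a stably-pseudocoherent module over the Tate algebra. Applying $W(-)$, and taking the completions along the Fontainisation indicated in the setup, yields a map $W(A^\flat\left<T_1,\dots,T_d\right>) \to W(B^\flat)$. Precomposing with the natural comparison map out of $W(A^\flat)\left<T_1,\dots,T_d\right>$, sending each $T_i$ to the class of a Teichm\"uller lift of $s^\flat(T_i)$, produces the candidate
\begin{align}
s : W(A^\flat)\left<T_1,\dots,T_d\right> \to W(B^\flat).
\end{align}
Surjectivity of $s$ I would deduce from surjectivity of $s^\flat$ together with $p$-adic completeness: a map of $p$-adically complete rings is surjective as soon as it is surjective modulo $p$, and modulo $p$ the reduction of $s$ recovers $s^\flat$ up to the Frobenius twist intrinsic to the Witt construction.

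Second I would establish that $W(B^\flat)$ is stably-pseudocoherent over $W(A^\flat)\left<T_1,\dots,T_d\right>$. The strategy is a d\'evissage along the $p$-adic filtration, deducing the mixed-characteristic coherence from the characteristic-$p$ coherence of $B^\flat$ over $A^\flat\left<T_1,\dots,T_d\right>$: each associated graded piece modulo $p$ is governed by the corresponding reduction, and pseudocoherence propagates up the $p$-complete tower by the two-out-of-three and localization stability already exploited in \cref{lemma2.5} and \cref{proposition2.5}. The stability of the condition under rational localization is precisely what licenses the local-to-global passage here.

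Finally, and this is where the hypothesis enters, I would invoke the assumption that the kernel of $s$ is closed. The subtlety flagged in \cref{definition2.3} and in \cite[Theorem 1.4.20]{Ked1} is exactly that stably-pseudocoherence is equivalent to sheafiness only when the kernel of the defining surjection is closed; granting this, the module structure established in the second step upgrades to (strong) sheafiness of $W(B^\flat)$, supplying the last ingredient in the definition of an affinoid morphism.

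The hard part will be the coherence transfer of the second step. In the non-noetherian setting the Witt functor does not commute with Tate completion on the nose, since the Tate algebra $A^\flat\left<T_1,\dots,T_d\right>$ is not perfect; one may therefore be forced to pass to its perfection $A^\flat\left<T_1^{1/p^\infty},\dots,T_d^{1/p^\infty}\right>$ or to take derived $p$-completions, and one must verify that pseudocoherence is genuinely \emph{preserved}, not merely reflected, under $W(-)$. The closed-kernel hypothesis is exactly what neutralizes the parallel difficulty on the sheafiness side, so the true obstruction lies in this coherence comparison rather than in the final upgrade.
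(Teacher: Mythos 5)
Your first step agrees with the paper's: both lift the surjection $A^\flat\langle T_1,\dots,T_d\rangle \twoheadrightarrow B^\flat$ coming from the affinoidness of $A^\flat\to B^\flat$ to a surjection $W(A^\flat)\langle T_1,\dots,T_d\rangle\to W(B^\flat)$, and your justification of surjectivity (reduction mod $p$ plus $p$-adic completeness) is a reasonable way to make the paper's word ``lifting'' precise. Where you diverge is in how the stably-pseudocoherent module condition is obtained, and this is where your argument has a genuine gap. You propose to prove stably-pseudocoherence of $W(B^\flat)$ over $W(A^\flat)\langle T_1,\dots,T_d\rangle$ directly by d\'evissage along the $p$-adic filtration, and only afterwards use the closed-kernel hypothesis to upgrade to sheafiness. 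But the d\'evissage is exactly the step you cannot carry out as stated: $W(-)$ does not commute with the Tate completion in the variables $T_i$ (the Tate algebra is not perfect), the graded pieces of the $p$-adic filtration on $W(B^\flat)$ are Frobenius twists of $B^\flat$, and pseudocoherence does not pass to a $p$-adic limit in this non-noetherian Banach setting without uniform control on the resolutions. You flag this yourself as ``the hard part,'' but flagging it does not close it; as written the second step is missing.

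The paper's own proof runs Kedlaya's equivalence in the opposite direction and thereby avoids the d\'evissage entirely: it takes the sheafiness of the target $W(B^\flat)$ as the known input (this is where the perfectness of $B^\flat$ and the Fontainisation are doing the work, in the spirit of the stable uniformity of such Witt-vector rings in Kedlaya--Liu) and then invokes \cite[Theorem 1.4.20]{Ked1} --- applicable precisely because the kernel is assumed closed --- to conclude that $W(B^\flat)$ is stably-pseudocoherent over $W(A^\flat)\langle T_1,\dots,T_d\rangle$. So in the paper the closed-kernel hypothesis converts sheafiness into pseudocoherence, whereas in your write-up it is asked to convert pseudocoherence into sheafiness; the latter direction forces you to supply the pseudocoherence by hand, which is the unproved step. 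If you want to salvage your route, you would need either to pass to the perfection of the Tate algebra so that $W(-)$ behaves, or to prove a genuine $p$-adic d\'evissage lemma for stably-pseudocoherent modules over these Banach rings; the shorter path is to follow the paper and first establish (or cite) sheafiness of $W(B^\flat)$.
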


\begin{proof}
We only need to check this locally, locally we have that there is a lifting:
\begin{align}
W(A^\flat)\{T_1,...\}\rightarrow W(B^\flat)\rightarrow 0	
\end{align}
from the corresponding surjection:
\begin{align}
A^\flat \{\overline{T}_1,...\}\rightarrow B^\flat \rightarrow 0.	
\end{align}
And what we have is that this map on the Witt vector level is also realizing the target as a stably-pseudocoherent module over the source since we have that the target is sheafy (\cite[Theorem 1.4.20]{Ked1}). 
\end{proof}

\begin{proposition}
Same holds for the construction of integral Robba ring $\widetilde{\mathcal{R}}^r_*$ and Robba ring $\widetilde{\mathcal{R}}^{[s,r]}_*$ with respect to closed intervals as in \cite{KL1} and \cite{KL2}.	
\end{proposition}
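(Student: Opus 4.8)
The plan is to reduce the statement to exactly the same local analysis as in the preceding proposition for the Witt vector functor, exploiting the fact that both the integral Robba ring $\widetilde{\mathcal{R}}^r_*$ and the closed-interval Robba ring $\widetilde{\mathcal{R}}^{[s,r]}_*$ are obtained from $W((-)^\flat)$ by a functorial localization-and-completion procedure: one completes a suitable localization of the Witt vectors with respect to the Gauss norm $\|\cdot\|_r$ (for the half-open situation $\widetilde{\mathcal{R}}^r_*$) or with respect to $\max(\|\cdot\|_s,\|\cdot\|_r)$ over the closed interval $[s,r]$ (for $\widetilde{\mathcal{R}}^{[s,r]}_*$), as in \cite{KL1} and \cite{KL2}. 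Since $A^\flat\rightarrow B^\flat$ is assumed localized intrinsic \'etale, it is in particular affinoid, so I would first fix a local surjection $A^\flat\{\overline{T}_1,\ldots\}\rightarrow B^\flat$ and, exactly as in the previous proposition, lift it through the Witt vector functor to $W(A^\flat)\{T_1,\ldots\}\rightarrow W(B^\flat)$.

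First I would observe that the Gauss-norm completion is functorial and compatible with the formation of Tate algebras, so that the lifted surjection induces a morphism $\widetilde{\mathcal{R}}^r_A\langle T_1,\ldots\rangle\rightarrow \widetilde{\mathcal{R}}^r_B$ (and likewise over $[s,r]$) whose image is at least dense. Next I would invoke the open mapping property in this context to promote density to genuine surjectivity, just as the infinitesimal-modification argument in \cref{lemma2.5} was used to rectify a dense family of generators into an honest surjection. Finally, granting that the kernel is closed, the sheafiness of the target Robba ring — which holds in the strongly sheafy situation by \cite[Theorem 1.4.20]{Ked1} — yields the equivalence between stable pseudocoherence and sheafiness, and hence realizes $\widetilde{\mathcal{R}}^r_B$ as a stably-pseudocoherent module over $\widetilde{\mathcal{R}}^r_A\langle T_1,\ldots\rangle$, which is precisely affinoidness.

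The hard part will be controlling the behaviour at the endpoints of the closed interval $[s,r]$ for $\widetilde{\mathcal{R}}^{[s,r]}_*$ and, more seriously, the passage to the Fr\'echet (inverse-limit) structure underlying the integral Robba ring $\widetilde{\mathcal{R}}^r_*$, where surjectivity need not survive a naive inverse limit and where the open mapping estimate must be applied uniformly in the radius. I expect the argument to go through cleanly for each fixed Banach stratum $\widetilde{\mathcal{R}}^{[s,r]}_*$ by the local analysis above, with the remaining delicate point being that the hypothesis that the kernel is closed is genuinely needed (again the subtlety around \cite[Theorem 1.4.20]{Ked1}): without it the equivalence between stable pseudocoherence and sheafiness, and therefore the conclusion of affinoidness, may fail.
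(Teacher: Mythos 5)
Your proposal follows essentially the route the paper intends: the paper states this proposition with no proof of its own, implicitly deferring to the local-lifting argument of the preceding Witt-vector proposition, and that reduction is exactly what you carry out. You in fact supply considerably more detail than the paper does --- the functoriality of the Gauss-norm completion with respect to Tate algebras, the use of the open mapping property to upgrade density of the image to surjectivity on each Banach stratum $\widetilde{\mathcal{R}}^{[s,r]}_*$, and the role of the closed-kernel hypothesis via \cite[Theorem 1.4.20]{Ked1} --- and you are right to single out the passage to the Fr\'echet inverse limit defining $\widetilde{\mathcal{R}}^r_*$ as the genuinely delicate point, since surjectivity need not survive the inverse limit over $s$; the paper is silent on this, so flagging it as unresolved is a strength of your write-up rather than a gap relative to the paper's argument.
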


\

\section{\'Etale-Like Morphisms of $\infty$-Banach Rings and the $\infty$-Analytic Stacks}

\subsection{Approach through De Rham Stacks}

\indent We now extend the corresponding discussion to the $\mathrm{E}_\infty$ objects in \cite[Remark 3.16]{BBBK} by using the ideas as in \cite{R}. Recall from \cite[Theorem 3.14]{BBBK} we have the corresponding categories $\mathrm{Simp}\mathrm{Ind}^m(\mathrm{BanSets}_{H})$ and $\mathrm{Simp}\mathrm{Ind}(\mathrm{BanSets}_{H})$ which are the corresponding categories of the corresponding simplicial sets over the corresponding inductive categories of the corresponding Banach sets over some Banach ring $H$ \footnote{It is safer to assume the open mapping properties on homotopy groups.}.

\begin{theorem}\mbox{\bf{(Bambozzi-Ben-Bassat-Kremnizer)}} The corresponding categories\\
 $\mathrm{Simp}\mathrm{Ind}^m(\mathrm{BanSets}_{H})$ and $\mathrm{Simp}\mathrm{Ind}(\mathrm{BanSets}_{H})$ admit symmetric monoidal model categorical structure. Same holds for $\mathrm{Simp}\mathrm{Ind}^m(\mathrm{NrSets}_{H})$ and $\mathrm{Simp}\mathrm{Ind}(\mathrm{NrSets}_{H})$.	
\end{theorem}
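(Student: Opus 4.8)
The statement is precisely \cite[Theorem 3.14]{BBBK}, so the plan is to reconstruct the structure of that argument. I would proceed in three stages: first equip the underlying ind-categories with a closed symmetric monoidal structure, then produce a cofibrantly generated model structure on the associated simplicial objects, and finally verify the compatibility (monoidal) axioms that upgrade this to a symmetric monoidal model category of the kind required by the homotopical-algebraic-geometry formalism of \cite{TV1}.

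First I would recall that $\mathrm{BanSets}_H$ and $\mathrm{NrSets}_H$ (Banach, resp. normed, modules over the Banach ring $H$) are closed symmetric monoidal under the completed projective tensor product $\widehat{\otimes}_H$ with unit $H$, and that the free module on a Banach/normed set furnishes a family of projective generators. Passing to the ind-completion, one extends $\widehat{\otimes}_H$ by the usual filtered-colimit (Day-convolution) formula, which endows $\mathrm{Ind}(\mathrm{BanSets}_H)$ and its monomorphic refinement $\mathrm{Ind}^m(\mathrm{BanSets}_H)$ with closed symmetric monoidal structures; here one must check that the tensor of two essentially monomorphic ind-diagrams is again essentially monomorphic, which is exactly why the superscript-$m$ variant is singled out. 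Crucially these ind-categories are elementary quasi-abelian and locally presentable, so they possess all limits and colimits and the smallness needed below, and the same formal statements hold verbatim for the normed versions.

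Second I would build the underlying model structure on $\mathrm{Simp}\mathrm{Ind}^m(\mathrm{BanSets}_H)$ (and likewise on $\mathrm{Simp}\mathrm{Ind}(\mathrm{BanSets}_H)$). The approach is to transfer, via the small object argument, a cofibrantly generated projective model structure through the Dold--Kan / Moore-complex picture for simplicial objects in a quasi-abelian category: weak equivalences are the maps inducing quasi-isomorphisms on the associated Moore complexes, fibrations are characterized degreewise as the strict epimorphisms with projective kernels, and the generating (trivial) cofibrations are assembled from the standard simplicial cells $\partial\Delta^n \to \Delta^n$ together with the horn inclusions, tensored against the free ind-modules that generate the base. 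Local presentability guarantees that the small object argument applies, yielding the two factorizations and the requisite lifting properties.

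Finally I would verify the pushout-product axiom and the unit axiom. As usual this reduces to evaluating the pushout-product on generating cofibrations and generating trivial cofibrations; since these generators are built from free modules and simplicial cells, the pushout-product of two generating cofibrations is again a cofibration, by the corresponding fact for simplicial sets combined with freeness. The hard part will be the trivial-cofibration case, namely showing that tensoring a generating trivial cofibration with an arbitrary object preserves weak equivalences. This is delicate because $\widehat{\otimes}_H$ is only right exact in the quasi-abelian setting, so one must know that the free (cofibrant) generators are flat for the completed tensor product and that $\widehat{\otimes}_H$ is thereby homotopically well-behaved on cofibrant objects; establishing this flatness is the technical heart of the argument, and it is precisely what the passage to the monomorphic ind-category $\mathrm{Ind}^m$ is designed to secure.
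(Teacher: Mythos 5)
There is nothing in the paper to compare your proposal against: the statement is imported verbatim as an external result, attributed to Bambozzi--Ben-Bassat--Kremnizer and cited from \cite[Theorem 3.14]{BBBK}, and the paper supplies no proof of it (nor of the corollary that follows, which merely extracts the underlying $(\infty,1)$-categories). So your attempt should be judged as a reconstruction of the argument in \cite{BBBK} rather than against anything in this text.

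As such a reconstruction, your three-stage outline is the right architecture and matches the strategy actually used there: the categories of Banach and normed modules over $H$ are quasi-abelian and closed symmetric monoidal under the completed projective tensor product; their ind-completions are elementary quasi-abelian and locally presentable, so simplicial objects in them carry a combinatorial model structure with weak equivalences detected on the normalized (Moore) complex; and the monoidal axioms are checked on generating (trivial) cofibrations, with the essential technical input being the exactness properties of $\widehat{\otimes}_H$ on projectives. Two details deserve correction. First, your characterization of fibrations (``strict epimorphisms with projective kernels'') conflates the two classes: in the projective-type model structure it is the \emph{cofibrations} that are degreewise strict monomorphisms with projective cokernel, while fibrations are the maps whose normalized complex is a strict epimorphism in positive degrees; the quasi-abelian setting forces the word ``strict'' everywhere, which is where Schneiders' formalism enters. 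Second, your explanation of why the monomorphic variant $\mathrm{Ind}^m$ is singled out is speculative; the distinction in \cite{BBBK} is driven by the identification of $\mathrm{Ind}^m(\mathrm{BanSets}_H)$ with (complete) bornological objects and by the need for both flavors in the descent arguments, not primarily by closure of essential monomorphy under the tensor product. Neither point invalidates the outline, but a complete write-up would have to get the strict/non-strict bookkeeping right, since that is where quasi-abelian arguments usually break if done carelessly.
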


\begin{corollary}
The corresponding categories $\mathrm{Simp}\mathrm{Ind}^m(\mathrm{BanSets}_{H})$ and $\mathrm{Simp}\mathrm{Ind}(\mathrm{BanSets}_{H})$ admit presentations as $(\infty,1)$-categories. Same holds for $\mathrm{Simp}\mathrm{Ind}^m(\mathrm{NrSets}_{H})$ and $\mathrm{Simp}\mathrm{Ind}(\mathrm{NrSets}_{H})$.	
\end{corollary}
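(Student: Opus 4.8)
The plan is to invoke the standard passage from model categories to $(\infty,1)$-categories, applied to the model structures furnished by the preceding theorem. First I would recall that by the Bambozzi-Ben-Bassat-Kremnizer theorem above, each of $\mathrm{Simp}\mathrm{Ind}^m(\mathrm{BanSets}_H)$, $\mathrm{Simp}\mathrm{Ind}(\mathrm{BanSets}_H)$, $\mathrm{Simp}\mathrm{Ind}^m(\mathrm{NrSets}_H)$ and $\mathrm{Simp}\mathrm{Ind}(\mathrm{NrSets}_H)$ carries a (symmetric monoidal) model categorical structure, hence comes equipped with a distinguished class of weak equivalences $\mathcal{W}$. The general principle is that any such pair $(\mathcal{M},\mathcal{W})$ presents an $(\infty,1)$-category, namely the Dwyer-Kan simplicial localization $L^H(\mathcal{M},\mathcal{W})$, or equivalently the $\infty$-categorical localization $N(\mathcal{M})[\mathcal{W}^{-1}]$ obtained by formally inverting the weak equivalences.

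Second, to obtain a genuinely well-behaved (presentable) presentation I would verify that these model categories are combinatorial. The underlying categories $\mathrm{Ind}(\mathrm{BanSets}_H)$ and $\mathrm{Ind}(\mathrm{NrSets}_H)$ are locally presentable, being $\mathrm{Ind}$-completions of essentially small categories, and passing to simplicial objects preserves local presentability; the model structures produced in the construction of \cite{BBBK} are cofibrantly generated, so combinatoriality holds. Then I would apply Dugger's theorem to replace each by a Quillen-equivalent simplicial combinatorial model category, pass to the full simplicial subcategory of fibrant-cofibrant objects, and take its homotopy coherent nerve; by the results of Lurie \cite{Lu1} this yields a presentable $(\infty,1)$-category, which is the desired presentation.

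Third, for the symmetric monoidal refinement: since the model structures are symmetric monoidal, and one checks the monoidal unit together with the relevant cofibrancy and pushout-product conditions (which are part of the previous theorem), the induced localization is compatible with the tensor structure, so the presented $(\infty,1)$-category inherits a symmetric monoidal structure in the sense of \cite{Lu2}. The four cases are handled identically, the normed-set versions $\mathrm{NrSets}_H$ being formally parallel to the Banach-set versions $\mathrm{BanSets}_H$, and the $m$-decorated variants differing only in the class of admissible $\mathrm{Ind}$-diagrams.

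The hard part will be the precise verification of combinatoriality, specifically the local presentability of $\mathrm{Ind}(\mathrm{BanSets}_H)$ together with the cofibrant generation of the relevant (transferred) model structure in this functional-analytic setting. If combinatoriality turns out to be delicate to establish directly, one can fall back to the Dwyer-Kan hammock localization, which applies to any relative category and still produces an honest $(\infty,1)$-category presentation, albeit without the presentability refinement; this weaker statement already suffices for the stack-theoretic constructions pursued in the sequel.
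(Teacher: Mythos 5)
Your proposal is correct and follows the same route the paper intends: the paper offers no written proof, treating the corollary as an immediate consequence of the preceding Bambozzi--Ben-Bassat--Kremnizer theorem via the standard passage from a (symmetric monoidal) model category to its underlying $(\infty,1)$-category by localizing at the weak equivalences. Your additional discussion of combinatoriality and presentability goes beyond what the statement asks for (it only claims a presentation as an $(\infty,1)$-category, not presentability), but it is a reasonable strengthening and does not change the argument.
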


\indent Then recall from \cite[Remark 3.16]{BBBK} we have the corresponding ring objects in the $\infty$-categories above:
\begin{align}
\mathrm{sComm}(\mathrm{Simp}\mathrm{Ind}^m(\mathrm{BanSets}_{H})),\\
\mathrm{sComm}(\mathrm{Simp}\mathrm{Ind}(\mathrm{BanSets}_{H})).	
\end{align}
and 
\begin{align}
\mathrm{sComm}(\mathrm{Simp}\mathrm{Ind}^m(\mathrm{NrSets}_{H})),\\
\mathrm{sComm}(\mathrm{Simp}\mathrm{Ind}(\mathrm{NrSets}_{H})).	
\end{align}

\indent Now we use general notation $A$ to denote any object in these categories, regarding as a general $\mathrm{E}_\infty$-ring. We consider the general morphism $A\rightarrow B$ in the first two categories in the following discussion.

\begin{definition}
For any general morphism $A\rightarrow B$, we call this affinoid if we have that that $\pi_0(B)$ is affinoid over $\pi_0(A)$, namely we have that there is a surjection map $\pi_0(A)\left<X_1,...,X_d\right>\rightarrow \pi_0(B)$. And moreover we assume that $\pi_0(B)\otimes_{\pi_0(A)}\pi_n(A)\overset{\sim}{\rightarrow}\pi_n(B)$, for any $n$. 	
\end{definition}

\begin{remark}
Kedlaya's theorem \cite[Theorem 1.4.20]{Ked1} is actually expected to hold in more general setting, at least in the situation where the definition of the affinoid morphisms could be made independent from the corresponding stably-pseudocoherence for open mapping rings (note that we are working over analytic fields). However in the previous definition, we have been not really exact. To be really accurate in the characterization of some desired notion of the affinoidness we think that one has to add certain $\infty$-sheafiness (which certainly holds in \cite{BK}). To be more precise for any general morphism $A\rightarrow B$ in \cite{BK} (namely in current situation one considers the corresponding Banach algebras over the analytic fields in our situation), we call this affinoid if we have that that $\pi_0(B)$ is affinoid over $\pi_0(A)$, namely we have that there is a surjection map $\pi_0(A)\left<X_1,...,X_d\right>\rightarrow \pi_0(B)$. And moreover we assume that $\pi_0(B)\otimes_{\pi_0(A)}\pi_n(A)\overset{\sim}{\rightarrow}\pi_n(B)$, for any $n$. In this situation we have the nice sheafiness (up to higher homotopy). Again similar discussion could be made in the context of \cite{CS}. Note that \cite[Theorem 1.4.20]{Ked1} literally says that the sheafiness is equivalent (in some nice sense but in more flexible derived sense) to the stably-pseudocoherence.
\end{remark}

\begin{definition}
For any general morphism $A\rightarrow B$, we call this localized intrinsic \'etale if we have that that $\pi_0(B)$ is localized intrinsic \'etale over $\pi_0(A)$, namely we have that there is a surjection map $\pi_0(A)\left<X_1,...,X_d\right>\rightarrow \pi_0(B)$ and we have that locally the corresponding truncated topological cotangent complex is basically quasi-isomorphic to zero. And moreover we assume that $\pi_0(B)\otimes_{\pi_0(A)}\pi_n(A)\overset{\sim}{\rightarrow}\pi_n(B)$, for any $n$.	
\end{definition}

\indent We now use the corresponding $X=\mathrm{Spec}A$ to denote the corresponding $\infty$-stack in the opposite categories with respect to the ring $A$. We now define the corresponding de Rham stack attached to $X$ as in \cite[Remark 1.2]{R}:

\begin{definition}
We now define:
\begin{align}
X_\mathrm{dR}(R):=\varinjlim_{I} X(\pi_0(R)/I)	
\end{align}
for any $R$ in 
\begin{align}
\mathrm{sComm}(\mathrm{Simp}\mathrm{Ind}^m(\mathrm{BanSets}_{H})),\\
\mathrm{sComm}(\mathrm{Simp}\mathrm{Ind}(\mathrm{BanSets}_{H})).	
\end{align}
And the injective limit is taking throughout all nilpotent ideals of $\pi_0(R)$. 	
\end{definition}

\begin{remark}
As in \cite[Definition 1.1, Remark 1.2]{R}, one can actually define the corresponding de Rham and crystalline spaces for any functor from $\mathrm{sComm}(\mathrm{Simp}\mathrm{Ind}^m(\mathrm{BanSets}_{H})))	$ and $\mathrm{sComm}(\mathrm{Simp}\mathrm{Ind}(\mathrm{BanSets}_{H})))$ to $\underline{s\mathrm{Sets}} $. This means we do not have to consider $(\infty,1)$-sheaves satisfying certain $\infty$-descent conditions. 
\end{remark}

\begin{definition}
The corresponding formal completion of any morphism $X=\mathrm{Spec}B\rightarrow Y=\mathrm{Spec}A$:
\begin{align}
Y_{X,\mathrm{dR}}	
\end{align}
is defined to be:
\begin{align}
Y_{X,\mathrm{dR}}(R):=\varinjlim_{I} X(\pi_0(R)/I)\times_{Y(\pi_0(R)/I)}	Y(R),
\end{align}
for any $R$ in 
\begin{align}
\mathrm{sComm}(\mathrm{Simp}\mathrm{Ind}^m(\mathrm{BanSets}_{\mathbb{Q}_p})),\\
\mathrm{sComm}(\mathrm{Simp}\mathrm{Ind}(\mathrm{BanSets}_{\mathbb{Q}_p})).	
\end{align}
And the injective limit is taking throughout all nilpotent ideals of $\pi_0(R)$.	
\end{definition}

\begin{remark}
Certainly it is actually not clear how really we should deal with the corresponding ideals here, namely we are not for sure if we need to consider closed ideals. But for simplicial noetherian rings we really have some nice definitions, which will certainly be tangential to the corresponding Huber's original consideration.
\end{remark}

\begin{definition}
We now define the corresponding de Rham intrinsic \'etale morphism to be an affinoid morphism $X=\mathrm{Spec}B\rightarrow Y=\mathrm{Spec}A$ which satisfies the condition:
\begin{align}
\pi_0(X(R))\overset{\sim}{\rightarrow}	\pi_0(Y_{X,\mathrm{dR}}(R)),
\end{align}
for any $\mathrm{E}_\infty$-object $R$.	
\end{definition}

\begin{proposition}
Compositions of de Rham intrinsic \'etale morphisms are again $\mathrm{PD}$ intrinsic \'etale morphisms.
\end{proposition}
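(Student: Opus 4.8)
The plan is to show that the composition of two de Rham intrinsic \'etale morphisms inherits both the affinoidness and the de Rham condition from its factors. Let $X=\mathrm{Spec}C\rightarrow Y=\mathrm{Spec}B$ and $Y=\mathrm{Spec}B\rightarrow Z=\mathrm{Spec}A$ be two de Rham intrinsic \'etale morphisms, so that on $\pi_0$ each is affinoid and each satisfies the de Rham fixed-point condition $\pi_0(X(R))\overset{\sim}{\rightarrow}\pi_0(Y_{X,\mathrm{dR}}(R))$ and similarly for the second. First I would verify that the composite $X\rightarrow Z$ is affinoid: on $\pi_0$ this is exactly the statement that affinoid morphisms compose, which is the earlier lemma asserting that $f_2\circ f_1$ is affinoid, together with the fact that the base-change relations $\pi_0(C)\otimes_{\pi_0(B)}\pi_n(B)\overset{\sim}{\rightarrow}\pi_n(C)$ and $\pi_0(B)\otimes_{\pi_0(A)}\pi_n(A)\overset{\sim}{\rightarrow}\pi_n(B)$ can be spliced to give $\pi_0(C)\otimes_{\pi_0(A)}\pi_n(A)\overset{\sim}{\rightarrow}\pi_n(C)$ by associativity of the relative tensor product over the intermediate $\pi_0(B)$.

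Next I would address the de Rham condition for the composite, which is the heart of the matter. The key observation is a transitivity property for the formal completion functor $Y_{X,\mathrm{dR}}$: for the tower $X\rightarrow Y\rightarrow Z$ one expects a canonical identification
\begin{align}
Z_{X,\mathrm{dR}}(R)\cong Y_{X,\mathrm{dR}}(R)\times_{Y_{Y,\mathrm{dR}}(R)}Z_{Y,\mathrm{dR}}(R)
\end{align}
coming from the definition $Z_{X,\mathrm{dR}}(R):=\varinjlim_I X(\pi_0(R)/I)\times_{Z(\pi_0(R)/I)}Z(R)$ and the compatibility of the colimit over nilpotent ideals $I$ with the intermediate completions. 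Granting this, one applies the de Rham hypothesis for $X\rightarrow Y$ to rewrite $X(\pi_0(R)/I)$ in terms of $Y_{X,\mathrm{dR}}$ and the de Rham hypothesis for $Y\rightarrow Z$ to collapse the intermediate term, after applying $\pi_0$ throughout and using that $\pi_0$ commutes with the filtered colimits and fiber products appearing here.

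Concretely, the argument reduces to chasing the fixed-point isomorphisms through the defining colimit. I would write $\pi_0(X(R))\overset{\sim}{\rightarrow}\pi_0(Y_{X,\mathrm{dR}}(R))$ and then show $\pi_0(Y_{X,\mathrm{dR}}(R))\overset{\sim}{\rightarrow}\pi_0(Z_{X,\mathrm{dR}}(R))$ by substituting the second de Rham condition into the fiber product defining $Z_{X,\mathrm{dR}}$ and observing that the intermediate factor $Y(\pi_0(R)/I)$ becomes an identity after passing to the limit, precisely because the second morphism is de Rham intrinsic \'etale and hence infinitesimally rigid along $Y\rightarrow Z$. The composite isomorphism $\pi_0(X(R))\overset{\sim}{\rightarrow}\pi_0(Z_{X,\mathrm{dR}}(R))$ is then the desired de Rham condition for $X\rightarrow Z$.

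The hard part will be justifying the transitivity identification of the formal completions and, relatedly, the compatibility of the colimit $\varinjlim_I$ over nilpotent ideals with the two-step factorization; this is exactly where the unresolved subtlety flagged in the earlier remark -- whether one must restrict to closed ideals and how $\pi_0$ of a filtered colimit of Banach rings interacts with the fiber products -- becomes genuinely delicate, and one may need to impose the simplicial noetherian hypothesis mentioned in the introduction to make the manipulation rigorous. I should also note that the statement as written asserts the composite is \emph{PD} intrinsic \'etale rather than de Rham intrinsic \'etale; I read this as a typographical slip and would prove the de Rham conclusion, remarking that the identical argument applies verbatim in the PD (crystalline) setting once $Y_{X,\mathrm{dR}}$ is replaced by its divided-power analogue.
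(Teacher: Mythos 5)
The paper's own proof of this proposition is the single phrase ``This is formal,'' so you are supplying an argument where the paper supplies none. Your overall strategy --- compose affinoidness on $\pi_0$, splice the two base-change isomorphisms through $\pi_0(B)$, and chase the two fixed-point isomorphisms through a transitivity identity for the formal completions --- is exactly what ``formal'' has to mean here. Your reading of the conclusion is also defensible, though note that the paper separately records (in the next subsection) that every de Rham intrinsic \'etale morphism is PD intrinsic \'etale, so the statement as printed would follow from the de Rham version of the composition claim together with that implication; you need not treat ``PD'' as a slip.

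The one concrete problem is your displayed transitivity identity. You write $Z_{X,\mathrm{dR}}(R)\cong Y_{X,\mathrm{dR}}(R)\times_{Y_{Y,\mathrm{dR}}(R)}Z_{Y,\mathrm{dR}}(R)$, but $Y_{Y,\mathrm{dR}}(R)\cong Y(R)$ (the colimit defining it is constant), and there is no natural map $Z_{Y,\mathrm{dR}}(R)\rightarrow Y(R)$, only one to $\varinjlim_I Y(\pi_0(R)/I)$; taking the fiber product over the latter instead leaves a spurious extra $Y(R)$ factor. The identity you actually need is the termwise pasting law
\begin{align}
X(\pi_0(R)/I)\times_{Z(\pi_0(R)/I)}Z(R)\cong X(\pi_0(R)/I)\times_{Y(\pi_0(R)/I)}\left(Y(\pi_0(R)/I)\times_{Z(\pi_0(R)/I)}Z(R)\right),
\end{align}
followed by $\varinjlim_I$: the inner factor is the $I$-th term of the colimit defining $Z_{Y,\mathrm{dR}}(R)$, the second de Rham hypothesis replaces it by $Y(R)$, collapsing the right-hand side to $Y_{X,\mathrm{dR}}(R)$, and the first hypothesis finishes. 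Even then, the step of replacing a factor of a (homotopy) fiber product by something merely $\pi_0$-isomorphic to it, inside a filtered colimit, is the genuine mathematical content: $\pi_0$ of a pullback sees $\pi_1$ of the base, so the substitution is not automatic. You flag this correctly, but it remains unresolved --- in your write-up and in the paper alike --- and under the paper's standing hypotheses (e.g.\ the base-change condition $\pi_0(B)\otimes_{\pi_0(A)}\pi_n(A)\overset{\sim}{\rightarrow}\pi_n(B)$, or a simplicial noetherian assumption) one would want to say explicitly why it is harmless.
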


\begin{proof}
This is formal.	
\end{proof}

\subsection{Approach through Crystalline Stack and PD-morphisms}

\indent We now use the corresponding $X=\mathrm{Spec}A$ to denote the corresponding $\infty$-stack in the opposite categories with respect to the ring $A$. We now define the corresponding crystalline stack attached to $X$ as in \cite[Definition 1.1]{R}:

\begin{definition}
We now define:
\begin{align}
X_\mathrm{crys}(R):=\varinjlim_{I,\gamma} X(\pi_0(R)/I)	
\end{align}
for any $R$ in 
\begin{align}
\mathrm{sComm}(\mathrm{Simp}\mathrm{Ind}^m(\mathrm{BanSets}_{H})),\\
\mathrm{sComm}(\mathrm{Simp}\mathrm{Ind}(\mathrm{BanSets}_{H})).	
\end{align}
And the injective limit is taking throughout all nilpotent ideals of $\pi_0(R)$ and the corresponding PD-structures. 	
\end{definition}

\begin{definition}
The corresponding PD completion of any morphism $X=\mathrm{Spec}B\rightarrow Y=\mathrm{Spec}A$:
\begin{align}
Y_{X,\mathrm{crys}}	
\end{align}
is defined to be:
\begin{align}
Y_{X,\mathrm{crys}}(R):=\varinjlim_{I,\gamma} X(\pi_0(R)/I)\otimes_{Y(\pi_0(R)/I)}	Y(R),
\end{align}
for any $R$ in 
\begin{align}
\mathrm{sComm}(\mathrm{Simp}\mathrm{Ind}^m(\mathrm{BanSets}_{H})),\\
\mathrm{sComm}(\mathrm{Simp}\mathrm{Ind}(\mathrm{BanSets}_{H})).	
\end{align}
And the injective limit is taking throughout all nilpotent ideals of $\pi_0(R)$ and all the corresponding PD structures.	
\end{definition}

\begin{remark}
Certainly it is actually not clear how really we should deal with the corresponding ideals here and the corresponding PD structures, namely we are not for sure if we need to consider closed ideals. But for simplicial noetherian rings we really have some nice definitions, which will certainly be tangential to the corresponding Huber's original consideration.
\end{remark}

\begin{definition}
We now define the corresponding PD intrinsic \'etale morphism to be an affinoid morphism $X=\mathrm{Spec}B\rightarrow Y=\mathrm{Spec}A$ which satisfies the condition:
\begin{align}
\pi_0(X(R))\overset{\sim}{\rightarrow}	\pi_0(Y_{X,\mathrm{crys}}(R)),
\end{align}
for any $\mathrm{E}_\infty$-object $R$.		
\end{definition}

\begin{proposition}
We have that any de Rham intrinsic \'etale morphism is a PD intrinsic \'etale morphism.	
\end{proposition}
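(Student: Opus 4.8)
The plan is to exploit the universal property relating the two stacky constructions: the crystalline stack $X_{\mathrm{crys}}$ is built from the same colimit as $X_{\mathrm{dR}}$ but with the additional data of a PD-structure $\gamma$ on each nilpotent ideal $I$ of $\pi_0(R)$, so there is a natural forgetful comparison that collapses the extra PD-decoration. Concretely, for each nilpotent ideal $I$ the identity PD-structure (the trivial one, or equivalently the canonical divided powers on a nilpotent ideal when they exist) furnishes a map realizing the indexing category of pairs $(I,\gamma)$ as cofinal over the indexing category of ideals $I$ alone; more precisely, forgetting $\gamma$ gives a functor from the crystalline indexing diagram to the de Rham indexing diagram, and one checks this induces a natural transformation of the associated colimit functors. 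First I would write down, for a fixed test object $R$, the canonical morphism
\begin{align}
Y_{X,\mathrm{dR}}(R)\longrightarrow Y_{X,\mathrm{crys}}(R)
\end{align}
induced at each stage by the structure map of the filtered colimit, noting that both are computed as colimits over systems of quotients $\pi_0(R)/I$ with fiber products resp. tensor products over $Y(\pi_0(R)/I)$.

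Next I would reduce the claim to a statement purely about $\pi_0$. By definition a de Rham intrinsic \'etale morphism is an affinoid morphism with $\pi_0(X(R))\xrightarrow{\sim}\pi_0(Y_{X,\mathrm{dR}}(R))$ for all $R$, and to prove it is PD intrinsic \'etale I must produce the analogous isomorphism $\pi_0(X(R))\xrightarrow{\sim}\pi_0(Y_{X,\mathrm{crys}}(R))$. Since affinoidness is shared by both definitions, the content is entirely in comparing the two completions. I would factor the canonical map $X(R)\to Y_{X,\mathrm{crys}}(R)$ through $Y_{X,\mathrm{dR}}(R)$ using the comparison transformation above, so that the de Rham hypothesis handles the first arrow and it remains to understand the induced map on $\pi_0$ of the de Rham-to-crystalline comparison.

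The key step is then to verify that passing from the nilpotent-ideal colimit to the PD-nilpotent-ideal colimit does not change $\pi_0$. The guiding principle, following \cite{R}, is that in the filtered colimit over all nilpotent ideals $I$, the subsystem of those $I$ equipped with a PD-structure is cofinal (or at least $\pi_0$-cofinal): any nilpotent ideal can be refined to one admitting divided powers, and divided-power data on a nilpotent ideal is essentially forced up to the truncation we care about. Because $\pi_0$ commutes with filtered colimits, a cofinality argument on the indexing diagrams yields $\pi_0(Y_{X,\mathrm{dR}}(R))\xrightarrow{\sim}\pi_0(Y_{X,\mathrm{crys}}(R))$, and composing with the de Rham hypothesis gives the desired isomorphism.

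The hard part will be making this cofinality precise in the present $\infty$-Banach setting, because — as the surrounding remarks repeatedly flag — it is genuinely unclear whether one should restrict to closed ideals and how PD-structures interact with the Banach topology and the completion functors. In the strongly noetherian or simplicial noetherian regime the cofinality is standard and the argument is clean, which matches the paper's own hedging that the robust statements hold there; in full generality the subtlety is whether the PD-refinement can be carried out topologically. I expect that for the formal verification indicated (the author writes only ``This is formal'' for the companion composition result), the intended argument is exactly this cofinality/forgetful comparison at the level of $\pi_0$, with the topological subtleties absorbed into the standing affinoid and open-mapping hypotheses.
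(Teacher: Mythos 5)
The paper's own proof of this proposition is the single sentence ``This is formal,'' so any reconstruction is necessarily speculative; yours identifies the right structural ingredients (the forgetful functor on index diagrams, an induced comparison of colimits, reduction to $\pi_0$ using that $\pi_0$ commutes with filtered colimits). But two points do not survive scrutiny. First, the direction of your canonical map is reversed: the functor from the crystalline index category $\{(I,\gamma)\}$ to the de Rham index category $\{I\}$ given by forgetting $\gamma$ induces a map of colimits $Y_{X,\mathrm{crys}}(R)\to Y_{X,\mathrm{dR}}(R)$, not $Y_{X,\mathrm{dR}}(R)\to Y_{X,\mathrm{crys}}(R)$ as you write. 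This matters: absent an inverse, the factorization $X(R)\to Y_{X,\mathrm{crys}}(R)\to Y_{X,\mathrm{dR}}(R)$ together with the de Rham hypothesis yields only \emph{injectivity} of $\pi_0(X(R))\to\pi_0(Y_{X,\mathrm{crys}}(R))$; the surjectivity is exactly the non-formal part, since an identification of two points witnessed in the de Rham colimit at some nilpotent ideal $I''$ need not be witnessed at any PD-nilpotent ideal.

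Second, and more seriously, the cofinality claim on which you rest the surjectivity --- that any nilpotent ideal can be refined to one admitting divided powers and that PD data is ``essentially forced'' --- is false in characteristic $p$: the ideal $(x)\subset\mathbb{F}_p[x]/(x^{p+1})$ is nilpotent but admits no PD structure (one would need $x^p=p!\,\gamma_p(x)=0$), and when PD structures exist they are generally not unique. Over $\mathbb{Q}$-algebras every ideal carries unique divided powers, the two index categories coincide, and the proposition is a tautology; but the paper deploys the crystalline stack precisely over $H/\mathbb{F}_p$ in Section 8, where your argument does not close. Note also that the obstruction is purely algebraic, not (as your closing paragraph suggests) a matter of closed ideals or Banach completions. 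So either the proposition needs a characteristic-zero restriction, or a genuinely non-formal comparison of infinitesimal and PD thickenings must be supplied; the paper's ``This is formal'' provides neither, and your proposal, while a fair guess at the intended argument, inherits the same gap at its key step.
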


\begin{proof}
This is formal.	
\end{proof}

\begin{proposition}
Compositions of $\mathrm{PD}$ intrinsic \'etale morphisms are again $\mathrm{PD}$ intrinsic \'etale morphisms.
\end{proposition}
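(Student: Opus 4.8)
The plan is to mirror the formal argument already invoked for the de Rham case and for the composition of de Rham intrinsic \'etale morphisms, now carried out at the level of the crystalline completion functor $Y_{X,\mathrm{crys}}$. Suppose $X=\mathrm{Spec}B\rightarrow Y=\mathrm{Spec}A$ and $Y=\mathrm{Spec}A\rightarrow Z=\mathrm{Spec}C$ are both PD intrinsic \'etale, so that each is affinoid and each satisfies the defining equivalence on $\pi_0$ against the appropriate PD completion. First I would record that the affinoid property of the composite is already available: by the composition lemma for affinoid morphisms stated earlier (the two-out-of-three style lemma whose proof is marked ``straightforward''), the composite $X\rightarrow Z$ is affinoid, so it only remains to verify the $\pi_0$-equivalence against the crystalline completion $Z_{X,\mathrm{crys}}$.

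The key step is to establish a transitivity (base-change/compatibility) isomorphism for the PD completions, namely an identification of the shape
\begin{align}
Z_{X,\mathrm{crys}}\;\simeq\; Z_{Y,\mathrm{crys}}\times_{Y}\, Y_{X,\mathrm{crys}}
\end{align}
as functors into $\underline{s\mathrm{Sets}}$, compatible with the structure maps to $X$, $Y$, $Z$. Granting this, one evaluates on an arbitrary $\mathrm{E}_\infty$-object $R$, applies $\pi_0$, and threads the two hypotheses: the PD intrinsic \'etaleness of $Y\rightarrow Z$ gives $\pi_0(Y(R))\xrightarrow{\sim}\pi_0(Z_{Y,\mathrm{crys}}(R))$, and that of $X\rightarrow Y$ gives $\pi_0(X(R))\xrightarrow{\sim}\pi_0(Y_{X,\mathrm{crys}}(R))$. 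Combining these along the fiber product, together with the fact that $\pi_0$ commutes with the relevant finite limits up to the controlled discrepancy measured by the affinoid condition $\pi_0(B)\otimes_{\pi_0(A)}\pi_n(A)\xrightarrow{\sim}\pi_n(B)$, one concludes $\pi_0(X(R))\xrightarrow{\sim}\pi_0(Z_{X,\mathrm{crys}}(R))$, which is exactly the required equivalence for $X\rightarrow Z$ to be PD intrinsic \'etale.

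The hard part will be the transitivity isomorphism for the crystalline completions, and this is where the argument genuinely differs from the de Rham case. The crystalline completion involves a colimit $\varinjlim_{I,\gamma}$ taken not only over nilpotent ideals $I$ of $\pi_0(R)$ but also over the compatible PD-structures $\gamma$ on them, and one must check that the diagram of PD-thickenings for the composite is cofinal in (or at least equivalent on colimits to) the composite diagram of PD-thickenings for the two factors. This is precisely the point where the paper's own remark flags uncertainty about whether one should restrict to closed ideals, and where the PD-structure bookkeeping is most delicate: divided powers do not automatically extend along arbitrary maps, so the compatibility of $\gamma$ on the two layers must be arranged carefully. I expect that for the simplicial noetherian situation singled out in the earlier remarks the cofinality is clean and the isomorphism is immediate, so I would first prove the statement there and then assert it ``formally'' in general, exactly as the surrounding propositions do; the honest obstacle is that outside the noetherian range the colimit over PD-structures is not obviously well-behaved, and a fully rigorous proof would require controlling the interaction of the divided-power envelopes with the non-noetherian completions, which the paper has elsewhere acknowledged it has not resolved.
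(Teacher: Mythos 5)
Your proposal is, in substance, the argument the paper is gesturing at: the paper's entire proof is the single sentence ``This is formal,'' and your unwinding --- compose the affinoid conditions via the earlier composition lemma, then substitute the $\pi_0$-equivalence for $Y\rightarrow Z$ into the one for $X\rightarrow Y$ using the fact that the filtered colimit over $(I,\gamma)$ commutes with the finite limits appearing in the definition of $Y_{X,\mathrm{crys}}$ --- is exactly what ``formal'' has to mean here. Two corrections, though. First, the displayed transitivity identification $Z_{X,\mathrm{crys}}\simeq Z_{Y,\mathrm{crys}}\times_{Y}Y_{X,\mathrm{crys}}$ is not well formed as written: there is no natural structure map $Z_{Y,\mathrm{crys}}\rightarrow Y$ (only maps to $Y(\pi_0(R)/I)$ and to $Z(R)$), so the fiber product over $Y$ does not parse. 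The identity you actually need is the termwise one,
\begin{align}
X(\pi_0(R)/I)\times_{Z(\pi_0(R)/I)}Z(R)\;\cong\;X(\pi_0(R)/I)\times_{Y(\pi_0(R)/I)}\Bigl(Y(\pi_0(R)/I)\times_{Z(\pi_0(R)/I)}Z(R)\Bigr),
\end{align}
after which the two hypotheses are applied in the colimit; some care is still required because the base $Y(\pi_0(R)/I)$ varies with $I$, so the substitution is not a literal replacement of one colimit inside another but a factorization of $X(R)\rightarrow X(\pi_0(R)/I)\times_{Z(\pi_0(R)/I)}Z(R)$ through the two intermediate comparison maps, each of which becomes an isomorphism on $\pi_0$ in the colimit. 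Second, your worry about cofinality of PD-thickening diagrams is misplaced for the definition actually given in the paper: the colimit defining $Y_{X,\mathrm{crys}}(R)$ is indexed by nilpotent PD ideals $(I,\gamma)$ of $\pi_0(R)$, the \emph{test} object, and this index category is literally the same for $Z_{X,\mathrm{crys}}$, $Z_{Y,\mathrm{crys}}$ and $Y_{X,\mathrm{crys}}$; no comparison of thickening sites of $X$, $Y$, $Z$ is needed, so the divided-power bookkeeping you flag as the hard part does not actually arise. The genuine unresolved issues (closed versus arbitrary ideals, non-noetherian completions) are the ones the paper's own remarks already concede, and they affect the definitions rather than this composition statement.
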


\begin{proof}
This is formal.	
\end{proof}

\

\section{Lisse-Like and Non-Ramifi\'e-Like Morphisms of $\infty$-Banach Rings and the $\infty$-Analytic Stacks}

\subsection{Approach through De Rham Stacks}

\indent We now define the corresponding lisse-like morphisms along the idea in the previous section:

\begin{definition}
For any general morphism $A\rightarrow B$, we call this localized intrinsic lisse if we have that that $\pi_0(B)$ is localized intrinsic lisse over $\pi_0(A)$, namely we have that there is a surjection map $\pi_0(A)\left<X_1,...,X_d\right>\rightarrow \pi_0(B)$ and we have that locally the corresponding truncated topological cotangent complex is basically quasi-isomorphic to $\Omega_{\pi_0(B)/\pi_0(A)}[0]$. And moreover we assume that $\pi_0(B)\otimes_{\pi_0(A)}\pi_n(A)\overset{\sim}{\rightarrow}\pi_n(B)$, for any $n$.	
\end{definition}

\begin{definition}
We now define the corresponding de Rham intrinsic lisse morphism to be an affinoid morphism $X=\mathrm{Spec}B\rightarrow Y=\mathrm{Spec}A$ which satisfies the condition:
\begin{align}
\pi_0(X(R))\overset{}{\rightarrow}	\pi_0(Y_{X,\mathrm{dR}}(R))
\end{align}
being surjective, for any $\mathrm{E}_\infty$-object $R$.		
\end{definition}

\begin{definition}
For any general morphism $A\rightarrow B$, we call this localized intrinsic non-ramifi\'e if we have that that $\pi_0(B)$ is localized intrinsic non-ramifi\'e over $\pi_0(A)$, namely we have that there is a surjection map $\pi_0(A)\left<X_1,...,X_d\right>\rightarrow \pi_0(B)$ and we have that locally the corresponding truncated topological cotangent complex is basically quasi-isomorphic to $\Omega_{\pi_0(B)/\pi_0(A)}[0]$ which vanishes as well. And moreover we assume that $\pi_0(B)\otimes_{\pi_0(A)}\pi_n(A)\overset{\sim}{\rightarrow}\pi_n(B)$, for any $n$.	
\end{definition}


\subsection{Approach through Crystalline Stack and PD-morphisms}

\begin{definition}
We now define the corresponding PD intrinsic lisse morphism to be an affinoid morphism $X=\mathrm{Spec}B\rightarrow Y=\mathrm{Spec}A$ which satisfies the condition:
\begin{align}
\pi_0(X(R))\overset{}{\rightarrow}	\pi_0(Y_{X,\mathrm{crys}}(R))
\end{align}
being surjective, for any $\mathrm{E}_\infty$-object $R$\footnote{For non-ramifi\'e situation one considers injectivity.}.		
\end{definition}

\begin{proposition}
We have that any de Rham intrinsic lisse morphism is a PD intrinsic lisse morphism.	
\end{proposition}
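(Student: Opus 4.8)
The plan is to mirror the argument already used to prove that any de Rham intrinsic \'etale morphism is a PD intrinsic \'etale morphism, since the lisse versions differ only in that the comparison maps on $\pi_0$ are required to be surjective rather than isomorphisms. First I would recall the two relevant completions. For a morphism $X=\mathrm{Spec}B\rightarrow Y=\mathrm{Spec}A$ the de Rham completion is $Y_{X,\mathrm{dR}}(R)=\varinjlim_{I} X(\pi_0(R)/I)\times_{Y(\pi_0(R)/I)}Y(R)$, where $I$ ranges over nilpotent ideals of $\pi_0(R)$, whereas the crystalline completion is $Y_{X,\mathrm{crys}}(R)=\varinjlim_{I,\gamma} X(\pi_0(R)/I)\otimes_{Y(\pi_0(R)/I)}Y(R)$, where the colimit now runs over nilpotent ideals $I$ of $\pi_0(R)$ together with a choice of PD-structure $\gamma$ on $I$. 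The key structural observation is that the indexing category for the crystalline completion refines that of the de Rham completion: every pair $(I,\gamma)$ forgets to a nilpotent ideal $I$, giving a forgetful functor between the indexing diagrams. This induces a natural comparison map $Y_{X,\mathrm{dR}}(R)\rightarrow Y_{X,\mathrm{crys}}(R)$, functorial in $R$.

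Next I would assemble the commuting triangle. By hypothesis, the de Rham intrinsic lisse condition says that for every $\mathrm{E}_\infty$-object $R$ the canonical map $\pi_0(X(R))\rightarrow\pi_0(Y_{X,\mathrm{dR}}(R))$ is surjective. I would compose this with the comparison map above, so that the diagram
\begin{align}
\pi_0(X(R))\rightarrow\pi_0(Y_{X,\mathrm{dR}}(R))\rightarrow\pi_0(Y_{X,\mathrm{crys}}(R))
\end{align}
commutes and the composite is exactly the structural map $\pi_0(X(R))\rightarrow\pi_0(Y_{X,\mathrm{crys}}(R))$ that enters the definition of a PD intrinsic lisse morphism. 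Both $X$ and the completions are affinoid by hypothesis, so the affinoidness requirement of the target definition is inherited directly; no additional work is needed there.

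Finally I would deduce the surjectivity needed for the PD intrinsic lisse condition. Since $\pi_0(X(R))\rightarrow\pi_0(Y_{X,\mathrm{dR}}(R))$ is surjective, it suffices to show the comparison $\pi_0(Y_{X,\mathrm{dR}}(R))\rightarrow\pi_0(Y_{X,\mathrm{crys}}(R))$ is also surjective; then the composite, being a composite of surjections, is surjective, which is precisely the PD intrinsic lisse condition. The surjectivity of the comparison map reduces to the observation that every nilpotent ideal $I$ of $\pi_0(R)$ admits at least one compatible PD-structure in the relevant setting (for instance the trivial one, or, over the base rings one restricts to, the standard divided powers), so the crystalline colimit does not see strictly more than the de Rham one at the level of $\pi_0$; both colimits are taken over $\pi_0(R)/I$ and the extra PD-data only enriches the morphisms, not the objects reached. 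The hard part will be justifying this last reduction with full precision, since, as the surrounding remarks caution, it is not clear whether one must restrict to \emph{closed} nilpotent ideals and whether compatible PD-structures always exist in the non-noetherian locally convex setting; for simplicial noetherian rings the existence of PD-envelopes makes the comparison map on $\pi_0$ surjective (indeed an isomorphism after the relevant quotient), so I would state the result at the level of formality that the earlier \'etale analogue enjoys and defer the delicate functional-analytic subtleties to the noetherian case where they are controlled.
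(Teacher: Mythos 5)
The paper itself records no proof for this proposition (the \'etale analogue is dismissed with ``this is formal''), so the only thing to assess is whether your argument actually works, and there is a genuine gap in it: your comparison map points the wrong way. A functor between index diagrams $u\colon \{(I,\gamma)\}\rightarrow\{I\}$ (forgetting the PD-structure) induces a canonical map of colimits \emph{from} the colimit over the source diagram \emph{to} the colimit over the target diagram, i.e.\ a map $Y_{X,\mathrm{crys}}(R)\rightarrow Y_{X,\mathrm{dR}}(R)$, not $Y_{X,\mathrm{dR}}(R)\rightarrow Y_{X,\mathrm{crys}}(R)$ as you write. With the triangle $\pi_0(X(R))\rightarrow\pi_0(Y_{X,\mathrm{crys}}(R))\rightarrow\pi_0(Y_{X,\mathrm{dR}}(R))$ oriented correctly, surjectivity of the composite (the de Rham lisse hypothesis) says nothing about surjectivity of the first leg (the PD lisse conclusion), so the ``composite of surjections'' step collapses. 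Your proposed repair --- that every nilpotent ideal of $\pi_0(R)$ admits some PD-structure, so the two colimits agree on $\pi_0$ --- is false in general: already $(x)\subset \mathbb{F}_p[x]/(x^{p+1})$ is nilpotent but admits no divided powers, since one would need $p!\,\gamma_p(x)=x^p\neq 0$ while $p!=0$; and this failure occurs precisely in the characteristic-$p$ setting the later sections of the paper work in. Even when a PD-structure exists it need not be chosen functorially in $I$, which is what a section of $u$ would require.

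To make the statement ``formal'' in the way the \'etale analogue is claimed to be, you would need one of two additional inputs: either (a) the comparison $\pi_0(Y_{X,\mathrm{crys}}(R))\rightarrow\pi_0(Y_{X,\mathrm{dR}}(R))$ is injective, in which case surjectivity of the composite forces surjectivity of $\pi_0(X(R))\rightarrow\pi_0(Y_{X,\mathrm{crys}}(R))$ by elementary diagram chasing; or (b) a functorial PD-envelope construction $I\mapsto(D_I,\bar I,\gamma)$ cofinal in the crystalline index diagram, which is exactly the kind of existence question the paper's own remarks flag as unresolved in the non-noetherian locally convex setting. Your closing caveat deferring to the simplicial noetherian case is the right instinct, but as written the core of the argument rests on a reversed arrow and a false general claim, so it does not establish the proposition.
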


\

\section{Perfectization and Fontainisation of $\infty$-Analytic Stacks Situation}

\subsection{Perfectization, Fontainisation and Crystalline Stacks}

\noindent Now we consider the corresponding perfectoidization of $\infty$-analytic stacks after \cite{R} and \cite{Dr1} in the situation where $H$ is assumed to be of characteristic $p$. 

\begin{definition}
For any object in $\infty-\mathrm{Fun}(\mathrm{sComm}\mathrm{Simp}\mathrm{Ind}(\mathrm{Ban}_H),\underline{s\mathrm{Sets}})$, denoted by $X$, we define the corresponding \textit{perfectization} $X^{1/p^\infty}$ of $X$ to be the corresponding functor such that for any $R\in \mathrm{sComm}\mathrm{Simp}\mathrm{Ind}(\mathrm{Ban}_H)$ we have that $X^{1/p^\infty}(R):=X(R^\flat)$ where we define the corresponding tilting \textit{Fontainisation} $R^\flat$ of $R$ to be the corresponding derived completion of:
\begin{align}
\varprojlim\{ ...\overset{\mathrm{Fro}}{\longrightarrow}	R \overset{\mathrm{Fro}}{\longrightarrow} R \overset{\mathrm{Fro}}{\longrightarrow} R\}.
\end{align}

\end{definition}

\indent In the situation of the corresponding monomorphically inductive Banach sets, we have the parallel definition:

\begin{definition}
For any object in $\infty-\mathrm{Fun}(\mathrm{sComm}\mathrm{Simp}\mathrm{Ind}^m(\mathrm{BanSets}_H),\underline{s\mathrm{Sets}})$, denoted by $X$, we define the corresponding \textit{perfectization} $X^{1/p^\infty}$ of $X$ to be the corresponding functor such that for any ring $R\in \mathrm{sComm}\mathrm{Simp}\mathrm{Ind}^m(\mathrm{BanSets}_H)$ we have that $X^{1/p^\infty}(R):=X(R^\flat)$ where we define the corresponding tilting \textit{Fontainisation} $R^\flat$ of $R$ to be the corresponding derived completion of:
\begin{align}
\varprojlim\{ ...\overset{\mathrm{Fro}}{\longrightarrow}	R \overset{\mathrm{Fro}}{\longrightarrow} R \overset{\mathrm{Fro}}{\longrightarrow} R\}.
\end{align}

\end{definition}

\begin{remark}
This is very general notion beyond the corresponding $(\infty,1)$-sheaves satisfying certain descent with respect to the derived rational localizations or more general homotopy Zariski topology as in \cite{BK} and \cite{BBBK}. 	
\end{remark}

\indent Now we follow \cite[Proposition 5.3]{R} and \cite[Section 1.1]{Dr1} to give the following discussion around the corresponding Witt crystalline Stack:

\begin{definition}
We define the corresponding \textit{Witt Crystalline Stack} $X_W$ of any $X$ over $H/\mathbb{F}_p$ in $\infty-\mathrm{Fun}(\mathrm{sComm}\mathrm{Simp}\mathrm{Ind}(\mathrm{Ban}_H),\underline{s\mathrm{Sets}})$ or $\infty-\mathrm{Fun}(\mathrm{sComm}\mathrm{Simp}\mathrm{Ind}^m(\mathrm{Ban}_H),\underline{s\mathrm{Sets}})$ to be the functor $(W(\pi_0(X)^\flat)_{\pi_0(X),\mathrm{crys}})_p^\wedge$. And we define the corresponding pre-crystals to be sheaves of $\mathcal{O}$-modules over this functors when we have that $X$ is an $\infty$-analytic stack with reasonable topology.	
\end{definition}

\begin{example}
For instance if we have that $X=\mathrm{Spa}^h(R)$ coming from the corresponding Bambozzi-Kremnizer spectrum of any Banach ring over $\mathbb{F}_p((t))$ as constructed in \cite{BK}. Then we have that the corresponding functor is $(W(\pi_0(X)^\flat)_{\pi_0(X),\mathrm{crys}})_{p}^\wedge$ is now admitting structures coming from the corresponding homotopy Zariski topology from $X$.	
\end{example}

\begin{example}
For instance if we have that $X=\mathrm{Spec}(\mathbb{F}_p[[t]])$ coming from the corresponding object in the opposite category of $\mathbb{F}_p[[t]]$. Then we have that the corresponding functor is $(W(\pi_0(X)^\flat)_{\pi_0(X),\mathrm{crys}})_{p}^\wedge$ is now admitting structures coming from the corresponding homotopy Zariski topology from $X$, is just the same as the corresponding one in the algebraic setting constructed in \cite[Proposition 5.3]{R} and \cite[Section 1.1]{Dr1}.	
\end{example}

\subsection{Perfectization, Fontainisation and Robba Stacks}

\indent Now we contact \cite{KL1} and \cite{KL2} to look at the corresponding Robba Stacks. Now take any $X$ to be any $\infty$-analytic stack which admits structures of simplicial complete Bornological rings or ind-Fr\'echet structures, namely we have the corresponding complete bornological topology or ind-Fr\'echet topology on $\pi_0(X)$. We work over $H/\mathbb{F}_p$ as well. 

\begin{example}
For instance we take that $X=\mathrm{Spa}^h(R)$ coming from the corresponding Bambozzi-Kremnizer spectrum of any Banach ring over $\mathbb{F}_p((t))$ as constructed in \cite{BK}. 
\end{example}

\begin{definition}
For any $\infty$-analytic stack $X$ as above, we consider the corresponding Witt vector functor $W_n(\pi_0(X)^\flat)$ and then consider $\varinjlim_{n\rightarrow \infty}W_n(\pi_0(X)^\flat)$, namely $W(\pi_0(X)^\flat)$, then we consider the ring $W(\pi_0(X)^\flat)[1/p]$. Then we can take the corresponding completion with respect to the Gauss norm $\|.\|_{\pi_0(X),[s,r]}$ coming from the corresponding norm on $\pi_0(X)$ with respect to some interval $[s,r]\in (0,\infty)$ as in \cite[Definition 4.1.1]{KL2}:
\begin{align}
\widetilde{\Pi}(X)_{[s,r]}:=(W(\pi_0(X)^\flat)[1/p])^\wedge_{\|.\|_{\pi_0(X),[s,r]},\text{Fr\'e}}.	
\end{align}
Then following \cite[Definition 4.1.1]{KL2} we consider the following:
\begin{align}
\widetilde{\Pi}(X)_{r}:=\varprojlim_{s>0}(W(\pi_0(X)^\flat)[1/p])^\wedge_{\|.\|_{\pi_0(X),[s,r]},\text{Fr\'e}}	
\end{align}
and 
\begin{align}
\widetilde{\Pi}(X)_{}:=\varinjlim_{r>0}\varprojlim_{s>0}(W(\pi_0(X)^\flat)[1/p])^\wedge_{\|.\|_{\pi_0(X),[s,r]},\text{Fr\'e}}.	
\end{align}
We call these \textit{Robba stacks}.
\end{definition}
 
\begin{example}
In the situation where $X$ is some $\infty$-analytic stack carrying the corresponding sheaves of simplicial Banach rings (namely not in general bornological or ind-Fr\'echet) we have that the finite projective modules over the three Robba stacks (for by enough intervals) carrying semilinear Frobenius action which realizes the corresponding isomorphisms by Frobenius pullbacks are equivalent. In the noetherian setting we have the same holds for locally finite presented sheaves as well. This is the main results of \cite[Theorem 4.6.1]{KL2}.
\end{example}




\newpage

\subsection*{Acknowledgements} 

This is our independent work, however we would like to thank Professor Kedlaya for helpful discussion, in particular the corresponding nontrivial input on our understanding of the corresponding affinoid morphisms, Hansen-Kedlaya and the corresponding AWS lecture notes.

\newpage

\bibliographystyle{ams}

\end{document}